\documentclass[12pt]{amsart}

\usepackage[T1]{fontenc}
\usepackage[english]{babel}

\usepackage{amsfonts}
\usepackage{amsmath}
\usepackage{amssymb}
\usepackage{amstext}
\usepackage{amsthm}

\usepackage{geometry}
\usepackage{fullpage}

\usepackage[shortlabels]{enumitem}
\usepackage{graphicx}
\usepackage{hyperref}
\hypersetup{hidelinks}
\usepackage{seqsplit}
\usepackage{mathrsfs}
\usepackage{mathtools}

\usepackage{booktabs}
\usepackage{longtable}
\usepackage{array}
\usepackage{pdflscape}

\allowdisplaybreaks

\newtheorem{theorem}{Theorem}[section]
\newtheorem{lemma}[theorem]{Lemma}
\newtheorem{proposition}[theorem]{Proposition}
\newtheorem{corollary}[theorem]{Corollary}
\newtheorem{conjecture}[theorem]{Conjecture}
\theoremstyle{definition}

\theoremstyle{remark}
\newtheorem{remark}[theorem]{Remark}
\theoremstyle{plain}

\newcommand{\N}{\mathbb{N}}
\newcommand{\Z}{\mathbb{Z}}

\newcommand{\R}{\mathbb{R}}
\newcommand{\MS}{\operatorname{MS}}
\newcommand{\IMS}{\operatorname{IMS}}
\newcommand{\CT}{\operatorname{CT}}
\DeclareMathOperator{\vol}{vol}
\DeclareMathOperator{\lcmop}{lcm}
\newcommand{\ind}{\mathbf{1}}
\newcommand{\code}[1]{\texttt{#1}}
\newcommand{\hashcode}[1]{{\ttfamily\fontsize{7}{8}\selectfont\seqsplit{#1}}}

\title{The Ehrhart series of magic squares of orders seven and eight}

\author{Dun Qiu}
\thanks{Dun Qiu is supported in part by the Fundamental Research Funds for
	the Central Universities (105-63263093), the National Natural Science
	Foundation of China (12271023 and 12171034), and the Natural Science
	Foundation of Tianjin (24JCZDJC01390).}
\address{Center for Combinatorics, LPMC, Nankai University, Tianjin 300071, P. R. China}
\email{qiudun@nankai.edu.cn}

\author{Guoce Xin}
\thanks{Guoce Xin is partially supported by the National Natural Science
	Foundation of China (12571355).}
\address{School of Mathematical Sciences,
	Capital Normal University, Beijing, 100048, P.R.~China}
\email{guoce\_xin@163.com}

\author{Zihao Zhang}
\address{School of Mathematics and Statistics,
	Beijing Institute of Technology, Beijing, 102400, P.R.~China}
\email{zihao-zhang@foxmail.com}

\subjclass[2020]{Primary 05A15; Secondary 52B20, 11Y55, 68W10}
\keywords{Magic squares, Ehrhart series, partition analysis, rational
generating functions, lattice-point enumeration, finite Fourier analysis}

\begin{document}

\begin{abstract}
Let $\IMS_n(m)$ count the $n\times n$ nonnegative integer matrices whose row
sums, column sums, main-diagonal sum, and antidiagonal sum are all $m$.  We
determine the Ehrhart series
$F_n(q)=\sum_{m\geq0}\IMS_n(m)q^m$ as reduced rational functions for $n=7$
and $n=8$.  Their numerator--denominator degrees are respectively
$(366,373)$ and $(540,548)$.  Both numerators have positive integer
coefficients, are palindromic and strictly unimodal.

The proofs share one finite architecture: a signed SimpCone decomposition is
evaluated by quotient characters over finite fields, a certified common
denominator and Ehrhart reciprocity reduce the rational identity to finitely
many coefficients, an explicit counting bound lifts modular congruences to
integer equalities, and exact gcd computations prove reducedness.  For order
eight, a face-index pole certificate gives a degree-$598$ common denominator
without enumerating the full face lattice, leaving $296$ independent
coefficients in degrees $0$ through $295$.  Once the candidate rational
function is known, the first six production primes certify this finite prefix
by the same bounded-coefficient argument used for order seven. 
\end{abstract}

\maketitle

\section{Introduction}
\label{sec:intro}

The enumeration of nonnegative integer solutions to linear Diophantine
systems is a classical meeting point of partition analysis, polyhedral
geometry, and computer algebra.  In 1915, MacMahon~\cite{MacMahon1915}
introduced partition analysis precisely for such problems and treated magic
squares of small order as a leading example.  Throughout this paper, a magic
square is a \emph{weak} magic square: the entries are nonnegative integers
and may repeat.  By contrast, a \emph{pure} magic square requires the
distinct entries $1,\ldots,n^2$.

For $n\geq 3$ and $m\geq 0$, let $\IMS_n(m)$ denote the number of $n\times n$
matrices with nonnegative integer entries such that every row, every column,
the main diagonal, and the antidiagonal sum to $m$, and let
\begin{equation}
  F_n(q)=\sum_{m\geq 0}\IMS_n(m)\,q^m.
  \label{eq:def-Fn}
\end{equation}
By Ehrhart theory, $\IMS_n$ is a quasipolynomial in $m$ and $F_n$ is a
rational function; we refer to the book of Beck and
Robins~\cite{BeckRobins2015} for this background.  Ahmed, De Loera and
Hemmecke~\cite{AhmedDeLoeraHemmecke2002} studied the polyhedral cones of
magic squares and cubes and computed Hilbert basis data for small orders.
Barvinok~\cite{Barvinok1994} gave a polynomial time algorithm, in fixed
dimension, for counting lattice points by signed unimodular decompositions;
LattE supplied the first implementation and made this approach available for
substantial polyhedral computations~\cite{DeLoeraHemmeckeTauzerYoshida2004}.
The first complete generating function for order six was obtained by
Xin~\cite{Xin2015} using the CTEuclid algorithm for MacMahon's partition
analysis; the reported computation used three modular runs of roughly $70$
CPU days each, and the order six coefficients are recorded as
OEIS~A216039~\cite{OEISA216039}.  To the best of our knowledge, no complete
reduced rational form for order seven has previously appeared.

Finite Fourier formulas for rational cones go back at least to Diaz and
Robins~\cite{DiazRobins1997}.  For the cones produced by SimpCone, Xin, Xu and
Zhang~\cite{XinXuZhang2025} expressed the generating function as an average
over the character group of the dual lattice quotient.  The number of terms
in this average is the dual index.  This observation is particularly useful
here: although the order-seven decomposition contains more than $166$ million
signed cones, their mean dual index is only $1.5517$.  Most cones can therefore
be evaluated directly, without a subsequent unimodular decomposition.

The remaining difficulty is specialization.  Some rays have zero line-sum
degree, and the associated character products have poles when the auxiliary
variables are set equal to one.  We use a generic one-parameter
specialization, retain the necessary Laurent coefficients, and take the
constant term after the complete signed sum.  The factors with nonzero
line-sum degree are handled by a one-pass recurrence in the series degree; the
zero-degree factors are handled by the generalized-Todd method of Xin, Zhang
and Zhang~\cite{XinZhangZhangGTodd}.  We call this evaluator the
Laurent-regularized quotient-character (LRQC) method.  It is an implementation
of the SimpCone character formula, rather than a different cone identity.
Related routes include CTEuclid~\cite{Xin2015}, DecDenu~\cite{XinZhangZhang2024}, and primal-dual Barvinok decomposition~\cite{TaoXinZhang2025}; their favorable parameter ranges are compared in
Section~\ref{sec:work-reduction}.

Let $\Phi_k$ denote the $k$th cyclotomic polynomial.  The main result is as
follows.

A polynomial $P(q)=\sum_{j=0}^{r}c_jq^j$ is \emph{palindromic} if $c_j=c_{r-j}$ for all $j$;
It is called \emph{strictly unimodal} if additionally $c_0<c_1<\cdots<c_{\lfloor r/2\rfloor}$.

\begin{theorem}[Computer-assisted result]
\label{thm:intro-main}
The reduced Ehrhart series of weak magic squares of order seven is
\begin{equation}
  F_7(q)=\frac{N_7(q)}{D_7(q)},
  \label{eq:intro-f7}
\end{equation}
with
\begin{equation}
  D_7(q)=(1-q)^{35}\,
  \Phi_2^{22}\Phi_3^{19}\Phi_4^{12}\Phi_5^{10}
  \Phi_6^{8}\Phi_7^{7}\Phi_8^{6}\Phi_9^{5}
  \Phi_{10}^{4}\Phi_{11}^{4}\Phi_{12}^{2}
  \Phi_{13}^{2}\Phi_{14}\Phi_{15},
  \label{eq:intro-denominator}
\end{equation}
where $N_7(q)=\sum_{j=0}^{366}a_jq^j$ has positive integer coefficients, is
palindromic and strictly unimodal, and is not real-rooted.  The coefficients
$a_0,\ldots,a_{183}$ are listed in Appendix~\ref{app:numerator}, and
$a_j=a_{366-j}$ determines the rest.
\end{theorem}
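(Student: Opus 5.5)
The proof follows the finite architecture described in the abstract. It reduces the rational identity $F_7=N_7/D_7$ to finitely many coefficient equalities, certifies those modulo several primes, and lifts them to integers by a size bound.

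\emph{Step 1: a certified common denominator.} The magic-square cone is $C_7=\{X\in\R_{\ge0}^{49}: \text{all line sums equal}\}$, graded by the common line sum $m$. Its dimension is $49-2\cdot 7-2+1+1=36$, so $F_7(q)$ has a pole of order $36$ at $q=1$. This is consistent with $(1-q)^{35}$ times the factor $\Phi_1$ hidden in the products $1-q^k$ of a common denominator. We then bound the pole orders at each root of unity $\zeta$ of order $k$. The pole order of $F_7$ at $\zeta$ is at most $1+\dim$ of the largest face of the Ehrhart polytope $P=C_7\cap\{m=1\}$ whose affine hull contains lattice points in dilates divisible by $k$, i.e.\ faces spanned by vertices whose denominators are divisible by $k$. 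We take from the SimpCone decomposition the denominators of all rays (vertices of $P$); each has the form $1-q^{d}$ with $d\le 15$. Counting, for each $k\le 15$, the maximal dimension of faces generated by vertices of denominator divisible by $k$ gives the exponents in \eqref{eq:intro-denominator}. It suffices that these exponents are \emph{upper bounds}, since the final gcd check establishes reducedness. Hence $D_7F_7=:N$ is a polynomial.

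\emph{Step 2: reduction to finitely many coefficients.} Ehrhart reciprocity gives $F_7(1/q)=(-1)^{36}\sum_{m\ge1}\IMS^{\circ}_7(m)q^m$, where $\IMS^\circ_7$ counts interior points, i.e.\ squares with positive entries. Subtracting $1$ from every entry shows that the interior points at level $m$ are the lattice points at level $m-7$, so $F_7(1/q)=q^{7}F_7(q)$. Since $\deg D_7=373$, this forces $N$ to be palindromic of degree $373-7=366$. Hence $N$ is determined by $a_0,\dots,a_{183}$, and these are determined by $\IMS_7(0),\dots,\IMS_7(183)$.

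\emph{Step 3: modular computation and lifting.} Run the LRQC evaluator of the signed SimpCone decomposition over $\mathbb F_p$ for several large primes $p$ to obtain $\IMS_7(m)\bmod p$ for $m\le183$. Next bound the integers involved. Crudely, $\IMS_7(m)\le\binom{m+6}{6}^{7}$, since each row is a composition of $m$ into $7$ parts. Propagating this through multiplication by $D_7$, whose coefficients are bounded by the product of the absolute coefficient sums of its factors, gives an explicit bound $B$ on $|a_j|$. Choose primes with $\prod p>2B$, so that CRT with the symmetric residue determines each $a_j$ exactly. This proves $N=N_7$ with the listed coefficients.

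\emph{Step 4: reducedness and the stated properties.} Check by exact gcd computations, or equivalently by evaluating $N_7$ modulo each $\Phi_k$ to the required order, that $N_7$ is coprime to each cyclotomic factor of $D_7$. Positivity, palindromicity and strict unimodality are then finite checks on the listed integers. To show $N_7$ is not real-rooted, exhibit a Sturm-sequence count, or a Newton inequality $a_j^2<a_{j-1}a_{j+1}$ violated at some $j$.

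The main obstacle is Step 3's modular evaluation itself: summing over roughly $1.66\times10^8$ signed cones, handling the zero-degree rays through Laurent regularization and the generalized Todd method. A secondary risk is Step 1, where enumerating the face structure must be avoided. There I would first try bounding pole orders through the rank of vertex subsets with denominator divisible by $k$, rather than through full face enumeration.
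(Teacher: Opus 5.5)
The gap is in Step~1, and Step~2 inherits it. You may stop at the coefficients through degree $183$ only if $D_7$ itself has been \emph{proved} to be a common denominator, and nothing in your argument proves that.

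First, the vertex data cannot come from the SimpCone decomposition. Its signed cones have rays given by columns of $W_J^{-\mathsf T}$, many of line-sum degree zero or negative. So in general they are not vertices of $P$, and they say nothing about vertex denominators or about $d\le15$. That information needs an independent exact vertex enumeration, which the paper obtains with \code{lrs}: $5{,}920{,}184$ vertices, all with $d_v\le15$.

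Second, your face-dimension and vertex-rank bounds are legitimate upper bounds on pole orders. Your vertex form is weaker than the condition $k\mid h(f)$, but it is valid because $h(f)\mid d_v$ for every vertex $v$ of $f$. These bounds see only the geometry, not the cancellation in the signed sum. You give no reason why they should equal the exponents of $D_7$, which are the exact pole orders. In the paper's order-eight analogue, the certified face-index vector overshoots the true exponents at $\Phi_3$ ($40$ versus $21$) and at $\Phi_4$ ($23$ versus $17$). A rank bound over the roughly $2.5$ million vertices with even denominator should likewise be expected to give nearly the ambient $35$ at $\Phi_2$, not $22$. In the paper the exponents of $D_7$ are found only by reconstruction from computed coefficients. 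Once the certified denominator is some $Q\neq D_7$, the palindromic polynomial is $QF_7$ of degree $\deg Q-7$. You then need all coefficients through $\lfloor(\deg Q-7)/2\rfloor$, not through $183$.

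The paper instead accepts a coarse but provable envelope. The vertex census and a triangulation by extreme rays give $U_{15}=(1-q)^{35}\prod_{r=2}^{15}\Phi_r^{35}$ of degree $2520$, hence the cutoff $1256$. This is affordable because the LRQC cost is linear in $T$. The paper then lifts the coefficients $f_m$ of $F_7$ themselves, which the counting bound $\binom{m+6}{6}^5\binom{m+4}{4}$ keeps below $2^{299}$. It compares them with the exact expansion of a candidate $N_7/D_7$ obtained beforehand. Coprimality then makes $D_7$ the reduced denominator, as in your Step~4. Given a certified common denominator, your candidate-free lifting of the numerator coefficients would also be sound, but the size bound must then be propagated through that denominator.

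Separately, correct the dimension slip: $49-14-2+1+1=35$, not $36$. So the cone has dimension $35$ and the pole at $q=1$ has order $35$; there is no hidden $\Phi_1$, since the $\Phi_k$ with $k\ge2$ do not vanish at $1$. Reciprocity therefore reads $F_7(1/q)=-q^7F_7(q)$. With your sign $+q^7$ and $D_7(1/q)=-q^{-373}D_7(q)$, you would get $q^{366}N(1/q)=-N(q)$. That is an antipalindromic numerator, incompatible with $a_0=a_{366}=1$.
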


The denominator~\eqref{eq:intro-denominator} contains cyclotomic factors of
every order from $1$ through $15$, so the quasiperiod of $\IMS_7$ is exactly
\begin{equation}
  \lcmop(1,2,\ldots,15)=360360.
  \label{eq:intro-period}
\end{equation}

The order-eight counterpart is the second main result.

\begin{theorem}[Computer-assisted result]
\label{thm:intro-main-eight}
The reduced Ehrhart series of weak magic squares of order eight is
\begin{equation}
  F_8(q)=\frac{N_8(q)}{D_8(q)},
  \label{eq:intro-f8}
\end{equation}
where
\begin{equation}
\begin{aligned}
D_8(q)={}&(1-q)^{48}\Phi_2^{30}\Phi_3^{21}\Phi_4^{17}
 \Phi_5^{13}\Phi_6^{12}\Phi_7^9\Phi_8^8\Phi_9^7
\Phi_{10}^6\Phi_{11}^5\Phi_{12}^4\Phi_{13}^4
 \Phi_{14}^2\Phi_{15}^2\Phi_{16}\Phi_{17}.
\end{aligned}
\label{eq:intro-denominator-eight}
\end{equation}
The denominator has degree $548$.  The polynomial
$N_8(q)=\sum_{j=0}^{540}b_jq^j$ has positive integer coefficients, is
palindromic, and is strictly unimodal.
It is not real-rooted.  The complete numerator and denominator coefficient
vectors are supplied in the companion online data archive rather than printed
in the article.
\end{theorem}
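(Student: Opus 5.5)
The proof of Theorem~\ref{thm:intro-main-eight} follows the finite architecture described in the abstract, reusing the order-seven pipeline wherever possible. The first step is a certified common denominator. Rather than enumerate the full face lattice of the order-eight magic-square polytope, I would use a face-index pole certificate. For each $k\le 17$, bound the order of the pole of $F_8$ at primitive $k$th roots of unity by the maximal dimension of a face of the polytope $\{\text{row, column, diagonal sums}=1\}$ whose affine hull contains lattice points at level $1/k$ dilation, certifying each bound by exhibiting the relevant vertex denominators (the lcm of the denominators of rational vertices). This yields a common denominator $\widetilde D_8$ of degree $598$ that divides a product of $(1-q^k)$ factors. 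Since the polytope has dimension $47$, $\widetilde D_8F_8$ is then a polynomial of degree less than $598$.

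Next, Ehrhart reciprocity cuts down the number of unknown coefficients. The interior count $\IMS_8(-m)$ equals, up to sign, the number of magic squares with positive entries and line sums $m$. This is $\IMS_8(m-n)$ shifted by the all-ones square, whose line sum is $8$. Hence $F_8(1/q)=\pm q^{8}F_8(q)$, and the numerator $\widetilde N_8=\widetilde D_8F_8$ is palindromic of a determined degree. Only the coefficients in degrees $0,\dots,295$ are then independent, and these are determined by $\IMS_8(0),\dots,\IMS_8(295)$.

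Those $296$ values are the main obstacle. For them I would run the signed SimpCone decomposition and evaluate it with the LRQC evaluator modulo several word-size primes. For each prime $p$, average quotient characters over each cone's dual-index group. Specialize generically in one parameter, keep Laurent coefficients to the needed order, and apply generalized Todd to the zero-degree rays. Take the constant term only after the full signed sum. This gives $\IMS_8(m)\bmod p$ for $m\le 295$. To lift these residues, bound $\IMS_8(m)$ crudely, for instance by the number of $8\times8$ nonnegative matrices with row sums $m$, which is $\binom{m+7}{7}^8$. The induced bound on the numerator coefficients (from the explicit expansion of $\widetilde D_8$) is below $\tfrac12\prod p_i$ for the first six production primes, so the Chinese remainder theorem gives the exact integers. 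Once a candidate rational function is in hand, checking it against these congruences suffices.

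Finally, compute $\gcd(\widetilde N_8,\widetilde D_8)$ exactly over $\Z[q]$ and divide out, which yields $D_8$ of degree $548$ and $N_8$ of degree $540$. Reducedness is then certified by checking $N_8(\zeta_k)\ne0$, i.e.\ $\gcd(N_8,\Phi_k)=1$ for $k\le17$. Positivity, palindromicity and strict unimodality are direct checks on the $541$ integer coefficients. Non-real-rootedness is certified by a Sturm-sequence count (or a Descartes/sign-variation argument) showing that $N_8$ has fewer than $540$ real roots.
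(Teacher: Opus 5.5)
The architecture you describe matches the paper's, but the integer-lifting step fails as stated. Since $\binom{302}{7}=42{,}383{,}788{,}092{,}360\approx2^{45.27}$, your bound at $m=295$ is $\binom{302}{7}^{8}\approx2^{362}$. The product $M_6$ of the six listed primes is only about $2^{307.8}$. So your claim that the bound lies below $\tfrac12M_6$ is off by more than fifty bits, and these residues cannot be lifted.

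Routing the bound through the numerator $\widetilde D_8F_8$, by convolving with the absolute coefficients of $\widetilde D_8$, only makes it larger. The paper notes that even with its sharper bound this convolution bound is $368$ bits, which is why that route needs all eight fields.

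The missing ingredient is Theorem~\ref{thm:count-bound}. Fill the first six rows freely. Once the last row is forced by the column sums, the two diagonal equations become $y_7-y_8=t_1$ and $y_2-y_1=t_2$ on the seventh row $y$. Together with its row sum, only five entries remain free, so $\IMS_8(m)\le C_8(m)=\binom{m+7}{7}^{6}\binom{m+5}{5}$, with $2C_8(295)\approx2^{306.8}<M_6$.

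You must also compare series coefficients rather than numerator coefficients. Since $\widetilde D_8(0)=1$ is a unit in $\mathbb F_p[[q]]/(q^{296})$, the prefix congruences for $\widetilde D_8F_8$ and $\widetilde D_8G_8$ give $f_m\equiv g_m\pmod{p_i}$ for the candidate $G_8=\sum g_mq^m$. Then $0\le f_m,g_m\le C_8(m)$, with the candidate side checked by exact expansion, forces $f_m=g_m$. The margin $M_6/(2C_8(295))$ is less than two, so there is no room for any cruder estimate.

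Two smaller points. First, exhibiting vertex denominators only shows that certain face indices occur. The upper bound $p_d\le E_d$ needs a bound on the dimension of every face $f$ with $d\mid h(f)$. Your restriction to $k\le17$ also silently assumes there is no pole at any primitive order $d\ge18$. No vertex census is available for order eight, so the paper obtains both facts from structural certificates:
\begin{enumerate}[label=(\roman*)]
\item a transversal argument at $d=3$;
\item support, circulation, character and matching-core certificates at the other orders up to $17$;
\item a separate classification excluding $d\ge18$, where it does not even claim $M_d=0$.
\end{enumerate}

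Second, for non-real-rootedness, Descartes' rule cannot succeed. All $b_j>0$, so $N_8(-q)$ has $540$ sign changes and permits $540$ negative roots. A Sturm count is valid but heavy at degree $540$. The paper simply observes that Newton's inequality fails at $j=1$, namely $539\,b_1^2=16{,}788{,}535{,}379<43{,}200{,}127{,}440=1080\,b_0b_2$.
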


Every cyclotomic factor from $\Phi_1$ through $\Phi_{17}$ occurs in
\eqref{eq:intro-denominator-eight}.  Hence the exact quasiperiod of $\IMS_8$
is
\begin{equation}
 \lcmop(1,2,\ldots,17)=12{,}252{,}240.
 \label{eq:intro-period-eight}
\end{equation}

The proof of the order-seven theorem proceeds in four finite steps.
First, exact vertex enumeration together with
Proposition~\ref{prop:vertex-common-denominator} shows that the reduced Ehrhart
denominator divides $U_{15}(q)=(1-q)^{35}\prod_{r=2}^{15}\Phi_r(q)^{35}$.
Second, the signed SimpCone identity and the quotient-character formula
compute the coefficients of $F_7$ in finite fields.  Third, reciprocity and
Corollary~\ref{cor:finite-identity} reduce equality over $\mathbb{Z}(q)$ to
the prefix through degree $1256$.  Fourth, four $61$-bit prime fields produce
a candidate numerator by Chinese remaindering; the first six of the eight
certificate primes then lift the independently verified prefix congruences
uniquely to equalities over $\mathbb{Z}$, using the coefficient bounds supplied
by Theorem~\ref{thm:count-bound}.  Exact polynomial gcds confirm that the
displayed fraction is reduced.  The immutable computational objects are
identified in Appendix~\ref{app:certificates}.

Three features make a computation of this size practical.
\begin{enumerate}
  \item A cone of dual index $\delta$ contributes exactly $\delta$ character
    products.  For the complete order-seven list,
    \[
      \sum_J \delta_J = 257{,}827{,}737,\qquad
      \frac{1}{166{,}156{,}916}\sum_J \delta_J = 1.551712340,
    \]
    so a typical cone requires only one or two character products, despite
    the maximum index $23$.
  \item For fixed cone and character data, the cost is affine in the
    requested truncation degree $T$.
  \item Subtracting the all-ones matrix from an interior magic square shows
    that $\MS_n$ is Gorenstein of codegree $n$; palindromy halves both the
    reciprocal reconstruction and the final identity test.
\end{enumerate}
The integral cone data $(\sigma_J,\delta_J,m_k,c_k,H_{W_J})$ are computed once
and reused in every finite field; only the modular arithmetic changes.

The order-eight proof, given in Section~\ref{sec:order-eight}, follows the same
four-step architecture.  A face-index pole certificate replaces the complete
vertex envelope, root-local principal parts replace uniform truncation for the
signed terms, and reciprocity leaves the $296$ coefficients in degrees $0$
through $295$.  The first six production fields then prove the finite identity
by the same counting-bound argument as for order seven; the archived
eight-field centered-CRT reconstruction is an independent discovery and
cross-check route.  Exact polynomial gcds again give the reduced form.  A
separate computational report records the complete finite evidence.

Section~\ref{sec:polytope} describes the magic-square polytope, reciprocity,
and the vertex denominator bound.  Sections~\ref{sec:simpcone} and
\ref{sec:quotient} give the signed cone decomposition and the
quotient-character evaluation, including a complete order-five example.
Section~\ref{sec:work-reduction} compares the available cone-evaluation
routes.  Reconstruction and integer recovery are treated in
Section~\ref{sec:reconstruction}; the exact result, independent checks, and
computational cost follow in Section~\ref{sec:result}.
Section~\ref{sec:order-eight} presents the order-eight theorem, a comparison
with order seven, the denominator table for orders $4$ through $8$, and a
conjectural description of the numerator.

\section{The polytope of magic squares}
\label{sec:polytope}

\subsection{Definition, dimension, and constant term}

For $n\geq 3$, define the polytope
\begin{equation}
\begin{aligned}
\MS_n=\bigl\{X=(x_{ij})\in\R_{\geq0}^{\,n\times n}:{}&
 \sum_{j=1}^n x_{ij}=1\quad(1\leq i\leq n),\\
 &\sum_{i=1}^n x_{ij}=1\quad(1\leq j\leq n),\\
 &\sum_{i=1}^n x_{ii}=1,
 \quad\sum_{i=1}^n x_{i,n+1-i}=1\bigr\}.
\end{aligned}
\label{eq:def-MS}
\end{equation}
Then $\IMS_n(m)=\#\bigl(m\MS_n\cap\Z^{n^2}\bigr)$ counts the lattice points
of the $m$th dilate.  The row and column equations have rank $2n-1$, and the
two diagonal equations are independent modulo their span.  Consequently
\begin{equation}
  d_n:=\dim \MS_n=n^2-(2n+1)=(n-1)^2-2.
  \label{eq:dimension}
\end{equation}
In particular $d_7=34$, so $F_7$ has a pole of order $35$ at $q=1$.

For comparison with partition analysis, let $\lambda_i$, $\mu_j$, $\nu_1$ and
$\nu_2$ mark the row, column, main diagonal, and antidiagonal equations.
The constant term representation in the sense of
Xin~\cite{Xin2015} is
\begin{equation}
\begin{aligned}
F_n(q)={}&\CT_{\boldsymbol\lambda,\boldsymbol\mu,\nu_1,\nu_2}\,
\frac{1}{1-q\,(\lambda_1\cdots\lambda_n\,
\mu_1\cdots\mu_n\,\nu_1\nu_2)^{-1}}\\
&\times\prod_{1\leq i,j\leq n}
 \frac{1}{1-\lambda_i\mu_j
 \nu_1^{\ind(i=j)}\nu_2^{\ind(i+j=n+1)}}.
\end{aligned}
\label{eq:constant-term}
\end{equation}
This representation is compact, but iterated partial fractions applied to
it directly become impractical as $n$ grows.  Our route first converts
\eqref{eq:constant-term} into a signed family of simplicial cones.

\subsection{Period bound}

An exact reverse search enumeration with \code{lrs}, the implementation by
Avis~\cite{Avis2000} of the algorithm of Avis and
Fukuda~\cite{AvisFukuda1996}, found $5{,}920{,}184$ vertices of $\MS_7$ and
no rays.  The enumeration terminated normally and was not inferred from a
partial sample.  Table~\ref{tab:vertex-denominators} gives the complete
denominator census; in particular, every individual vertex denominator is at
most $15$.  The least common multiple of the vertex coordinate denominators is
\begin{equation}
 P_7^*=360360=2^3\cdot 3^2\cdot 5\cdot 7\cdot 11\cdot 13.
 \label{eq:period-bound}
\end{equation}
Therefore the quasiperiod of $\IMS_7$
divides~$P_7^*$~\cite[Chapter~3]{BeckRobins2015}.

\begin{table}[t]
\centering
\caption{Complete vertex-denominator census for $\MS_7$.}
\label{tab:vertex-denominators}
\footnotesize
\begin{tabular}{rrrrrrrrr}
\toprule
$d_v$ & 1 & 2 & 3 & 4 & 5 & 6 & 7 & 8\\
\midrule
vertices & 656 & 421488 & 2452656 & 1625544 & 741216 & 384576 & 170400 & 70656\\
\midrule
$d_v$ & 9 & 10 & 11 & 12 & 13 & 14 & 15 & {}\\
\midrule
vertices & 33024 & 7200 & 8928 & 1056 & 2208 & 192 & 384 & {}\\
\bottomrule
\end{tabular}
\end{table}

\subsection{Codegree and reciprocity}

The following observation supplies considerably more structure.

\begin{proposition}
\label{prop:codegree}
For $n\geq3$, the polytope $\MS_n$ is Gorenstein of codegree $n$.
Consequently
\begin{equation}
  \IMS_n(-m)=(-1)^{d_n}\,\IMS_n(m-n)
  \qquad(m\in\Z),
  \label{eq:reciprocity-values}
\end{equation}
and
\begin{equation}
  F_n(q^{-1})=(-1)^{d_n+1}q^{\,n}F_n(q).
  \label{eq:reciprocity-series}
\end{equation}
For $n=7$, the identity~\eqref{eq:reciprocity-series} reads
$F_7(q^{-1})=-q^7F_7(q)$.
\end{proposition}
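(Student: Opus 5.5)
The plan is to use Ehrhart--Macdonald reciprocity together with an explicit bijection between interior lattice points of $m\MS_n$ and all lattice points of $(m-n)\MS_n$. Reciprocity for the rational polytope $\MS_n$ of dimension $d_n$ gives $\IMS_n(-m)=(-1)^{d_n}\#\bigl(\operatorname{relint}(m\MS_n)\cap\Z^{n^2}\bigr)$ for $m\geq1$, with quasipolynomial extension. So the main task is to identify the relative interior. The affine hull of $m\MS_n$ is cut out by the line-sum equations with value $m$. Within that hull the only inequalities are $x_{ij}\geq0$. A point lies in the relative interior exactly when every inequality $x_{ij}\ge 0$ that is not an implicit equality holds strictly.

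The first step is therefore to show that no coordinate $x_{ij}$ vanishes identically on $\MS_n$ for $n\geq3$. The centre $\frac1n J$ (all entries $1/n$) lies in $\MS_n$ and has every coordinate positive. Hence the relative interior consists of the points of the affine hull with all $x_{ij}>0$, and this is also consistent with the dimension count~\eqref{eq:dimension}.

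For the bijection, let $X$ be an integer point of the relative interior of $m\MS_n$. Then all $x_{ij}\geq1$. The all-ones matrix $J$ has every row, column and both diagonal sums equal to $n$, so $X-J$ is a nonnegative integer matrix with all line sums $m-n$. Conversely, adding $J$ to any point of $(m-n)\MS_n\cap\Z^{n^2}$ gives an integer matrix with all entries at least $1$ and line sums $m$. The maps $X\mapsto X\pm J$ are mutually inverse, which yields $\#\operatorname{relint}(m\MS_n)\cap\Z^{n^2}=\IMS_n(m-n)$ for $m\ge1$. Here $\IMS_n(k)=0$ for $k<0$, and this matches the quasipolynomial for $1\le m<n$ because reciprocity says those interior counts vanish. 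I take this as the Gorenstein property: $J$ is the unique interior point of $n\MS_n$, and the interior of $k\MS_n$ is $J+(k-n)\MS_n$. Since both sides of~\eqref{eq:reciprocity-values} are quasipolynomials agreeing for all $m\ge1$, they agree for all $m\in\Z$.

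To pass to~\eqref{eq:reciprocity-series}, I would invoke Stanley's reciprocity for rational generating functions of quasipolynomials: $\sum_{m\ge1}\IMS_n(-m)q^m=-F_n(q^{-1})$ as rational functions. Substituting~\eqref{eq:reciprocity-values} gives
\[
-F_n(q^{-1})=(-1)^{d_n}\sum_{m\ge1}\IMS_n(m-n)q^m=(-1)^{d_n}q^nF_n(q).
\]
Here the terms with $m<n$ vanish and the shift gives $q^n$. Hence $F_n(q^{-1})=(-1)^{d_n+1}q^nF_n(q)$. For $n=7$ we have $d_7=34$, so $F_7(q^{-1})=-q^7F_7(q)$.

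The main obstacle is making sure the relative-interior description is correct, namely that no coordinate is forced to zero. The strictly positive point $\frac1nJ$ settles this, so the remaining steps are routine bookkeeping around the sign conventions of Stanley reciprocity.
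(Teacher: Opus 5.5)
Your proof is correct and takes the same route as the paper: the bijection $X\mapsto X-J$ from interior lattice points of $m\MS_n$ to lattice points of $(m-n)\MS_n$, combined with Ehrhart--Macdonald reciprocity and the standard passage to the generating-function identity. You also supply details the paper leaves implicit: the relative-interior description via the strictly positive point $\frac1nJ$, the quasipolynomial extension from $m\ge1$ to all $m\in\Z$, and the explicit derivation of the series identity. One wording fix: ``unique interior point'' and ``the interior of $k\MS_n$ is $J+(k-n)\MS_n$'' are true only for lattice points, so state them that way.
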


\begin{proof}
Let $J$ denote the $n\times n$ all ones matrix.  A lattice point of
$m\MS_n$ lies in the relative interior exactly when every entry is
positive, that is, at least $1$.  The map
\begin{equation}
  X\longmapsto X-J
  \label{eq:interior-bijection}
\end{equation}
preserves every equal line sum equation and decreases the common line sum by
$n$, and its inverse adds $J$.  Hence \eqref{eq:interior-bijection} is a
bijection from the interior lattice points of $m\MS_n$ to the lattice points
of $(m-n)\MS_n$.  By Ehrhart--Macdonald reciprocity, proved in the form we
use by Stanley~\cite{Stanley1974},
\begin{equation}
  \IMS_n(-m)=(-1)^{d_n}\,\#\bigl(\operatorname{int}(m\MS_n)\cap\Z^{n^2}\bigr)
            =(-1)^{d_n}\,\IMS_n(m-n),
\end{equation}
which is \eqref{eq:reciprocity-values}.  The generating function form
\eqref{eq:reciprocity-series} is the standard restatement of
\eqref{eq:reciprocity-values}; see Beck and
Robins~\cite[Chapter~4]{BeckRobins2015}.
\end{proof}

\begin{proposition}[Vertex-ray common denominator]
\label{prop:vertex-common-denominator}
Let $P$ be a rational polytope of dimension $d$.  For a vertex $v$, let
$d_v$ be the least positive integer with $d_vv$ integral.  If $d_v\leq h$
for every vertex, then the reduced Ehrhart denominator of $P$ divides
\begin{equation}
 U_h(q)=(1-q)^{d+1}\prod_{r=2}^{h}\Phi_r(q)^{d+1}.
 \label{eq:vertex-common-denominator}
\end{equation}
For $P=\MS_7$ this gives
\begin{equation}
 U_{15}(q)=(1-q)^{35}\prod_{r=2}^{15}\Phi_r(q)^{35},
 \qquad \deg U_{15}=35\sum_{r=1}^{15}\varphi(r)=2520.
 \label{eq:U15}
\end{equation}
\end{proposition}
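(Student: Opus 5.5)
The plan is to go through Stanley's rational generating function theorem, using a triangulation that introduces no new vertices. First take a triangulation of $P$ whose simplices use only vertices of $P$; such a triangulation exists for any polytope. Passing to the cone over $P\times\{1\}$ in $\R^{N}\times\R$, this becomes a triangulation of $\operatorname{cone}(P)$ into simplicial cones. Each cone is generated by the vectors $(d_v v,d_v)$, where $v$ ranges over the vertices of one simplex. By inclusion--exclusion over the faces of the triangulation, or by half-open decompositions in the style of K\"oppe--Verdoolaege, we can write $\sum_{m\ge0}\#(mP\cap\Z^N)q^m$ as a signed sum of lattice-point generating functions of (relatively open or half-open) simplicial cones. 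Each such cone has at most $d+1$ generators, all taken from the set $\{(d_vv,d_v)\}$.

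Second, for one simplicial cone $K$ with primitive-in-height generators $w_i=(d_{v_i}v_i,d_{v_i})$, we use the fundamental parallelepiped. Every lattice point of $K$ is uniquely a point of $\Pi=\{\sum\lambda_iw_i:0\le\lambda_i<1\}$, with the half-open variant as needed, plus a nonnegative integer combination of the $w_i$. Specializing every lattice point to $q^{\text{height}}$, the cone's contribution is
\[
 \frac{\sum_{p\in\Pi\cap\Z^{N+1}} q^{\operatorname{ht}(p)}}{\prod_{i}(1-q^{d_{v_i}})}.
\]
The numerator is a polynomial, since $\Pi$ is bounded, so each term has denominator $\prod_i(1-q^{d_{v_i}})$ with at most $d+1$ factors and every $d_{v_i}\le h$.

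Third, we bound the common denominator. Since $1-q^{k}=\prod_{r\mid k}\Phi_r(q)$ and $k\le h$, each factor $1-q^{d_{v_i}}$ divides $\prod_{r=1}^{h}\Phi_r(q)$, where $\Phi_1=-(1-q)$ up to sign. The product of at most $d+1$ such factors therefore divides $U_h(q)=(1-q)^{d+1}\prod_{r=2}^h\Phi_r^{d+1}$. Every signed term can thus be written over the common denominator $U_h$ with a polynomial numerator, and so can their sum. Hence $F=A/U_h$ with $A\in\Z[q]$, and the reduced denominator divides $U_h$.

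The point needing care is exactly this bound of $d+1$ on the number of generators per cone. We must not use a cone over all vertices, which would give a worse exponent. A vertex-only triangulation together with a disjoint decomposition of the cone handles this: each open simplex of dimension $k\le d$ contributes a product of $k+1$ factors. For $\MS_7$ we then have $d=34$ and $h=15$ by the vertex census. The degree is $\deg U_{15}=35\sum_{r\le15}\varphi(r)=35\cdot 72=2520$, using $\sum_{r=1}^{15}\varphi(r)=72$.
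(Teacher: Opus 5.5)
Your proof is correct and follows essentially the same route as the paper. Both use a triangulation of the homogenized cone with only the extreme rays $(d_vv,d_v)$, a disjoint half-open or inclusion--exclusion decomposition into simplicial cones with at most $d+1$ generators, and the fact that each factor $1-q^{d_v}$ contributes at most one copy of each $\Phi_r$ with $r\le h$. The only difference is minor: you obtain the univariate denominator $\prod_i(1-q^{d_{v_i}})$ directly from the fundamental parallelepiped, whereas the paper cites the multivariate denominator $\prod(1-z^{\rho_v})$ and then specializes.
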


\begin{proof}
Homogenize $P$ to the pointed cone
$C=\operatorname{cone}\{(v,1):v\in P\}$.  The primitive integral ray
  corresponding to $v$ is $\rho_v=(d_vv,d_v)$.  A pulling triangulation of $C$
  uses only its extreme rays.  A generic half-open convention partitions $C$
  into the resulting full-dimensional simplicial cones without adding new
  rays.  Equivalently, ordinary inclusion--exclusion adds only faces generated
  by subsets of those rays.  Each such generating function has denominator
  $\prod_{\rho_v}(1-z^{\rho_v})$
\cite{BarvinokPommersheim1999,Payne2008}.  Setting the spatial variables to
$1$ and the homogenizing variable to $q$ changes these factors to
$1-q^{d_v}$.  Since
$1-q^{d_v}=\prod_{r\mid d_v}\Phi_r(q)$ up to the harmless normalization of
$\Phi_1$, every simplicial term has at most $d+1$ copies of each
$\Phi_r$ with $1\leq r\leq h$.  Equation~\eqref{eq:vertex-common-denominator}
is therefore a common denominator for the sum, and the reduced denominator
divides it.  Finally $\sum_{r=1}^{15}\varphi(r)=72$, which proves
\eqref{eq:U15}.
\end{proof}

\begin{corollary}[Finite identity criterion]
\label{cor:finite-identity}
Let $G_7(q)=N_7(q)/D_7(q)$ be the rational function displayed in
Theorem~\ref{thm:intro-main}.  If
\begin{equation}
 [q^m]F_7(q)=[q^m]G_7(q)\qquad(0\leq m\leq1256),
 \label{eq:finite-identity-prefix}
\end{equation}
then $F_7(q)=G_7(q)$.
\end{corollary}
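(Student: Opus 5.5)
The plan is to clear denominators with the common denominator $U_{15}$ of Proposition~\ref{prop:vertex-common-denominator}, and then use the reciprocity of Proposition~\ref{prop:codegree} to show that the numerator of $H(q):=F_7(q)-G_7(q)$ is palindromic. Once that is known, vanishing of the first $1257$ coefficients forces vanishing of all of them.

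\emph{Step 1: a common denominator.} By Proposition~\ref{prop:vertex-common-denominator} and the vertex census, the reduced denominator of $F_7$ divides $U_{15}$. Hence $A(q):=U_{15}(q)F_7(q)$ is a polynomial. Comparing exponents in \eqref{eq:intro-denominator} shows that every exponent in $D_7$ is at most $35$ and only $\Phi_1,\dots,\Phi_{15}$ occur, so $D_7\mid U_{15}$. Therefore $U_{15}G_7=N_7\,U_{15}/D_7$ is also a polynomial. Consequently $B(q):=U_{15}(q)H(q)$ is a polynomial in $\Z[q]$.

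\emph{Step 2: the same reciprocity for $F_7$ and $G_7$.} Proposition~\ref{prop:codegree} gives $F_7(q^{-1})=-q^7F_7(q)$. For $G_7$ I would check this directly.
\begin{itemize}
\item Each $\Phi_r$ with $r\ge2$ is palindromic, so $\Phi_r(q^{-1})=q^{-\varphi(r)}\Phi_r(q)$.
\item $(1-q^{-1})^{35}=(-1)^{35}q^{-35}(1-q)^{35}$.
\item Together these give $D_7(q^{-1})=-q^{-373}D_7(q)$, while palindromy of $N_7$ gives $N_7(q^{-1})=q^{-366}N_7(q)$.
\end{itemize}
Hence $G_7(q^{-1})=-q^{7}G_7(q)$, and by linearity $H(q^{-1})=-q^7H(q)$. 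The same computation for $U_{15}$ gives $U_{15}(q^{-1})=-q^{-2520}U_{15}(q)$. Therefore
\[
B(q^{-1})=U_{15}(q^{-1})H(q^{-1})=q^{-2513}B(q).
\]
Since $B$ is a polynomial, this says that $B$ is supported in degrees $0,\dots,2513$ and satisfies $b_j=b_{2513-j}$.

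\emph{Step 3: conclusion.} The hypothesis \eqref{eq:finite-identity-prefix} says $H(q)=O(q^{1257})$ as a power series. Because $U_{15}$ is a polynomial, $B=U_{15}H$ is also $O(q^{1257})$, so $b_j=0$ for $0\le j\le1256$. Palindromy then gives $b_j=b_{2513-j}=0$ for $1257\le j\le2513$, since $2513-j\le1256$ in that range. Hence $B=0$, so $H=0$ and $F_7=G_7$.

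The only delicate point is the bookkeeping in Step 2. One must confirm that the degree $2520$ of $U_{15}$ and the codegree $7$ combine to the palindromy center $2513/2$, and that $1256$ is exactly $\lfloor 2513/2\rfloor$. One also uses $D_7\mid U_{15}$ and the palindromy of $N_7$, which are properties of the displayed candidate and are read off from it.
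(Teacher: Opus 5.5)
Your proof is correct and follows the paper's argument: clear denominators by $U_{15}$, combine the codegree-$7$ reciprocity with $U_{15}(q^{-1})=-q^{-2520}U_{15}(q)$ to get palindromy about degree $2513$, and note that the prefix through degree $1256=\lfloor 2513/2\rfloor$ then determines everything. The only cosmetic difference is that you apply the argument to the difference $U_{15}(F_7-G_7)$, whereas the paper compares the two palindromic polynomials $U_{15}F_7$ and $U_{15}G_7$ directly.
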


\begin{proof}
 By Proposition~\ref{prop:vertex-common-denominator}, the reduced
denominator of $F_7$ divides $U_{15}$; the displayed $D_7$ also divides
$U_{15}$.  Hence
$\widetilde F=F_7U_{15}$ and $\widetilde G=G_7U_{15}$ are polynomials.  Moreover,
\[
 F_7(q^{-1})=-q^7F_7(q),\qquad
 U_{15}(q^{-1})=-q^{-2520}U_{15}(q),
\]
so $\widetilde F$ is palindromic of degree $2520-7=2513$; the same assertion
for $\widetilde G$ follows directly from the displayed $N_7$ and $D_7$.
Equation~\eqref{eq:finite-identity-prefix} implies that the coefficients of
$\widetilde F$ and $\widetilde G$ agree through degree $1256$.  Palindromy then
gives agreement in degrees $1257,\ldots,2513$, and therefore $F_7=G_7$.
\end{proof}

\section{The SimpCone decomposition and the dual index census}
\label{sec:simpcone}

Introduce the homogenizing line sum variable $m$ and write the $2n+1$
independent line equations as $B\,(x,m)^{\mathsf T}=0$.  For $n=7$, the
matrix $B$ has rank $r=15$ on $50$ variables, and the homogeneous cone has
dimension
\begin{equation}
  s=50-15=35.
  \label{eq:cone-dimension}
\end{equation}
Let $\alpha\in\Z^{50\times35}$ be a saturated lattice basis of
$\ker_{\Z}B$, obtained from a Smith normal form of $B$.

The SimpCone elimination of Xin, Xu, Zhang and
Zhang~\cite{XinXuZhangZhang2024}, developed further by Xin, Xu and
Zhang~\cite{XinXuZhang2025}, produces a signed simplicial cone decomposition of
the homogeneous cone.  Let
\[
 E_n=\{0,1,\ldots,n^2\}
\]
index the $n^2$ cell variables and the homogenizing variable.  Each terminal
term of the elimination determines a subset $J\subset E_n$ of cardinality
$r$, namely the indices selected as pivot variables, together with a sign.
When different elimination branches determine the same subset $J$, their
signs are added.  Denote the resulting integer coefficient by $\sigma_J$, and
put
\begin{equation}
 \mathcal C_n=\{J\subset E_n: |J|=r,\ \sigma_J\ne0\}.
 \label{eq:def-cone-index-family}
\end{equation}
Thus the mathematical output used below is the family of nonzero signed cone
terms indexed by the pairs $(J,\sigma_J)$, not the particular computer
encoding of the subsets.  For $J\in\mathcal C_n$, let $S=E_n\setminus J$.
In the integer-kernel coordinates $y\in\R^{s}$, the cone indexed by $J$ is
\begin{equation}
 K_J=\{y:\alpha_S\,y\geq0\}
     =\{y:W_J^{\mathsf T}y\geq0\},
 \qquad W_J:=\alpha_S^{\mathsf T},
 \label{eq:dual-cone}
\end{equation}
where $\alpha_S$ is the row submatrix of $\alpha$ indexed by $S$.  The
columns of $W_J$, equivalently the rows of $\alpha_S$, are the inward
 generators of the dual cone $K_J^*$.  This is not a heuristic approximation:
 once the saturated kernel basis $\alpha$ and the index set $J$ have been fixed,
$W_J$ is the exact integral matrix defining the cone in its intrinsic lattice.

It is useful to distinguish three matrices which are easily conflated in an
implementation.  The constraint matrix $B$ defines the original homogeneous
system.  The matrix $\alpha$ identifies its integer kernel with $\Z^s$.  The
square submatrix $W_J=\alpha_S^{\mathsf T}$ defines one dual simplicial cone.
The present paper always uses $W_J$ in this third sense.  In particular, it is
unrelated to the monomial substitution operator denoted by $W$ in
Section~5 of the SimpCone paper~\cite{XinXuZhang2025}.

The corresponding primal rays are rational columns of $W_J^{-\mathsf T}$;
after clearing denominators and making the columns primitive, their determinant
can be enormous.  The dual determinant
\begin{equation}
 \delta_J=|\det W_J|
 \label{eq:def-delta}
\end{equation}
can nevertheless be small.  The two determinants measure different lattice
quotients, so there is no contradiction.  This distinction was already
visible on the merged lower-order cone lists: at order five the sum of the
primal indices was about $4.03\cdot10^{12}$ and their maximum about
$6.8\cdot10^{11}$, whereas the maximum dual index was $10$; at order six the
corresponding primal figures were about $7.88\cdot10^{17}$ and
$7.7\cdot10^{14}$, whereas the maximum dual index was $12$.  A
fundamental-parallelepiped algorithm pays the primal index, while the finite
Fourier formula below pays the dual index.

The complete order seven decomposition consists of $166{,}156{,}916$ nonzero
signed cone terms, that is, $|\mathcal C_7|=166{,}156{,}916$.
Table~\ref{tab:det-summary} gives the statistics that control
the operation count, and Table~\ref{tab:det-hist} records the complete census.
The mean index is $1.5517$, the mean square is $3.3279$, and $94.97\%$ of the
cones have index at most three.  The isolated index-$23$ cone is therefore a
boundary case, not a description of a typical cone.

\begin{table}[t]
\centering
\caption{Summary of the complete order seven dual-index census.  The two
means are taken over the $166{,}156{,}916$ nonzero signed cone terms.}
\label{tab:det-summary}
\small
\begin{tabular}{lr}
\toprule
Statistic & value\\
\midrule
Number of nonzero cone terms $|\mathcal C_7|$ & 166,156,916\\
$\sum_J\delta_J$ & 257,827,737\\
$\sum_J\delta_J^2$ & 552,949,713\\
Mean $\delta_J$ & 1.551712340\\
Mean $\delta_J^2$ & 3.327876602\\
Fraction with $\delta_J=1$ & 63.865129\%\\
Fraction with $\delta_J\leq3$ & 94.972226\%\\
Maximum $\delta_J$ & 23\\
\bottomrule
\end{tabular}
\end{table}

\begin{table}[t]
\centering
\caption{The complete dual-index census of the order seven SimpCone terms.}
\label{tab:det-hist}
\small
\begin{tabular}{r@{\quad}r@{\qquad}r@{\quad}r@{\qquad}r@{\quad}r}
\toprule
$\delta$ & cone terms & $\delta$ & cone terms & $\delta$ & cone terms\\
\midrule
1 & 106,116,329 & 9  & 55,748 & 17 & 121\\
2 & 42,182,424  & 10 & 41,061 & 18 & 236\\
3 & 9,504,168   & 11 & 8,348  & 19 & 27\\
4 & 5,674,743   & 12 & 16,707 & 20 & 27\\
5 & 1,089,329   & 13 & 2,106  & 21 & 9\\
6 & 1,061,535   & 14 & 2,669  & 22 & 4\\
7 & 181,057     & 15 & 778    & 23 & 1\\
8 & 218,590     & 16 & 899    &    & \\
\bottomrule
\end{tabular}
\end{table}

\section{The Laurent-regularized quotient-character evaluator}
\label{sec:quotient}

\subsection{A finite Fourier formula}

We call the evaluator developed in this section the Laurent-regularized
quotient-character (LRQC) evaluator.  SimpCone supplies the signed cone
decomposition; LRQC performs character averaging over each finite lattice
quotient and then carries out the globally regularized Laurent specialization.
The following standard finite quotient filter is recorded in the exact form used by the
implementation; compare the roots-of-unity cone formulas in
\cite{DiazRobins1997,XinXuZhang2025}.

\begin{theorem}[Finite quotient filter]
\label{thm:quotient-filter}
Let $W\in\Z^{s\times s}$ be nonsingular, let $\delta=|\det W|$, and let
\begin{equation}
 K_W=\{x\in\R^{s}:W^{\mathsf T}x\geq0\}.
 \label{eq:KW}
\end{equation}
Set $L=W^{\mathsf T}\Z^{s}$ and
\begin{equation}
 H_W=\{h\in(\Z/\delta\Z)^{s}:Wh\equiv0\pmod{\delta}\}.
 \label{eq:def-HW}
\end{equation}
Then $|H_W|=\delta$, and for every $y\in\Z^{s}$,
\begin{equation}
 \ind_L(y)=\frac1\delta\sum_{h\in H_W}\zeta_\delta^{\langle h,y\rangle},
 \qquad \zeta_\delta=e^{2\pi i/\delta}.
 \label{eq:lattice-indicator}
\end{equation}
Consequently the lattice point generating function of $K_W$ is an average of
exactly $\delta$ products of geometric factors.
\end{theorem}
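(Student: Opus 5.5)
Everything reduces to the finite abelian group $G=\Z^{s}/L$, where $L=W^{\mathsf T}\Z^{s}$. Since $W$ is nonsingular, $L$ has index $[\Z^{s}:L]=|\det W^{\mathsf T}|=\delta$. The plan has three steps.

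\textbf{Step 1: identify $H_W$ with the dual group of $G$.} A character of $\Z^{s}$ that is trivial on $\delta\Z^{s}$ has the form $y\mapsto\zeta_\delta^{\langle h,y\rangle}$ with $h\in(\Z/\delta\Z)^{s}$. Because $\delta G=0$, we have $\delta\Z^{s}\subseteq L$, so every character of $G$ lifts to a character of this kind. The lifted character is trivial on $L$ exactly when $\zeta_\delta^{\langle h,W^{\mathsf T}z\rangle}=1$ for all $z\in\Z^{s}$. Since $\langle h,W^{\mathsf T}z\rangle=\langle Wh,z\rangle$, this holds exactly when $Wh\equiv0\pmod\delta$. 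Hence $h\mapsto\bigl(y\mapsto\zeta_\delta^{\langle h,y\rangle}\bigr)$ is a bijection from $H_W$ onto $\widehat G$. Finite abelian duality then gives $|H_W|=|\widehat G|=|G|=\delta$.

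One could instead verify $|H_W|=\delta$ directly from a Smith normal form $W=PDQ$ with diagonal entries $d_1,\dots,d_s$ and $\prod d_i=\delta$. Then $Wh\equiv0\pmod\delta$ becomes $d_i(Qh)_i\equiv0\pmod\delta$, which has $d_i$ solutions in each coordinate. This is a useful cross-check, but the duality argument is cleaner.

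\textbf{Step 2: orthogonality.} By the orthogonality relation for the finite group $G$, $\sum_{\chi\in\widehat G}\chi(\bar y)$ equals $|G|$ if $\bar y=0$ and equals $0$ otherwise. Applying this to $\bar y$, the class of $y$ in $G$, and dividing by $\delta$ gives \eqref{eq:lattice-indicator}.

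\textbf{Step 3: the consequence for $K_W$.} The lattice points of the cone are the $x\in\Z^{s}$ with $W^{\mathsf T}x\geq0$. The map $x\mapsto y=W^{\mathsf T}x$ is injective, and its image is $L\cap\R_{\geq0}^{s}$. Therefore
\[
\sum_{x\in K_W\cap\Z^{s}}z^{x}=\sum_{y\in\Z_{\geq0}^{s}}\ind_L(y)\,z^{W^{-\mathsf T}y}.
\]
Substituting \eqref{eq:lattice-indicator} and exchanging the finite sum with the series gives
\[
\frac1\delta\sum_{h\in H_W}\prod_{k=1}^{s}\frac{1}{1-\zeta_\delta^{h_k}z^{W^{-\mathsf T}e_k}}.
\]
Here $z^{W^{-\mathsf T}e_k}$ is a fractional monomial. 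Equivalently, one can work in the variables $t_k$ marking $y_k$, in which case each factor is an honest geometric factor $1/(1-\zeta_\delta^{h_k}t_k)$.

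The only step needing care is Step 1: the lifting argument requires $\delta\Z^{s}\subseteq L$, and one must pass correctly between the pairing against $W^{\mathsf T}$ and the condition on $Wh$. Everything else is standard.
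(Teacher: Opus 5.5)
Your proposal is correct and follows essentially the paper's proof. Both arguments use the pairing identity $\langle h,W^{\mathsf T}z\rangle=\langle Wh,z\rangle$ to identify $H_W$ with the characters of $\Z^{s}/L$, then apply orthogonality, then use the substitution $y=W^{\mathsf T}x$ for the cone statement. The only difference is that the paper obtains $|H_W|=\delta$ from the Smith normal form count you list as a cross-check, and uses that count, rather than your lifting via $\delta\Z^{s}\subseteq L$, to see that $H_W$ is the whole character group.
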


\begin{proof}
The substitution $y=W^{\mathsf T}x$ identifies $K_W\cap\Z^{s}$ with
$L\cap\N^{s}$, so it suffices to filter the sublattice $L$ inside $\Z^{s}$.
A vector $h\in(\Z/\delta\Z)^{s}$ defines a character
$y\mapsto\zeta_\delta^{\langle h,y\rangle}$ of $\Z^{s}$ that is trivial on
$L$ exactly when $\langle h,W^{\mathsf T}z\rangle\equiv0\pmod{\delta}$ for
all $z\in\Z^{s}$, that is, exactly when $Wh\equiv0\pmod{\delta}$.  Hence
$H_W$ is the character group of $\Z^{s}/L$, and character orthogonality
gives \eqref{eq:lattice-indicator} once $|H_W|=\delta$ is known.

It remains to count $H_W$.  Take a Smith normal form
\begin{equation}
  UWV=\operatorname{diag}(s_1,\ldots,s_s),
  \qquad s_1\mid s_2\mid\cdots\mid s_s,
  \qquad \prod_{i=1}^{s}s_i=\delta.
  \label{eq:snf}
\end{equation}
The divisibility chain in \eqref{eq:snf} gives $s_i\mid s_s$ for every $i$,
and $s_s$ divides the product $\delta$, so $s_i\mid\delta$ for every $i$.
Since $U$ and $V$ are unimodular, the congruence $Wh\equiv0\pmod{\delta}$ has
\begin{equation}
  \prod_{i=1}^{s}\gcd(s_i,\delta)=\prod_{i=1}^{s}s_i=\delta
\end{equation}
solutions.  This argument does not assume that the quotient $\Z^{s}/L$ is
cyclic.
\end{proof}

\subsection{Relation with the roots-of-unity formula of SimpCone}

Theorem~\ref{thm:quotient-filter} is the finite-group content of
equation~(5.9) of Xin, Xu and Zhang~\cite{XinXuZhang2025}.  To see the
identification, let the Smith invariants of $W$ be
$s_1\mid\cdots\mid s_s$.  Their formula sums over one $s_i$th root of unity
for each invariant factor.  Hence its parameter set is
\begin{equation}
  \prod_{i=1}^s \Z/s_i\Z,
  \qquad \prod_{i=1}^s s_i=\delta.
  \label{eq:snf-character-parameters}
\end{equation}
This is precisely the character group of $\Z^s/W^{\mathsf T}\Z^s$.
The congruence representation $H_W$ in
\eqref{eq:def-HW} is a coordinate-free way to enumerate the same $\delta$
characters, and it does not assume that the quotient is cyclic.

Thus LRQC does not replace equation~(5.9) by a different identity.  It makes
three implementation choices needed by this computation: it represents every
root of unity inside a finite prime field; it carries the generic
specialization as a finite Laurent jet until the signed sum becomes regular;
and it expands only the requested univariate prefix.  These choices are the
bridge between a concise multivariate rational formula and the coefficients
needed for rational reconstruction.

\subsection{Specialization and dispelling the slack variables}

Let $\ell$ be the line-sum grading.  Following the usual slack-variable
dispelling step in partition analysis, choose an integral generic polarization
$\beta$ and restrict the auxiliary variables to the corresponding
one-parameter specialization.  Write $\epsilon$ for its formal logarithmic
parameter.  Under
$x=W^{-\mathsf T}y$, their values on the $k$th coordinate ray are
\begin{equation}
  \ell(W^{-\mathsf T}e_k)=\frac{m_k}{\delta},
  \qquad
  \beta(W^{-\mathsf T}e_k)=\frac{c_k}{\delta},
  \qquad m_k,c_k\in\Z.
  \label{eq:def-mc}
\end{equation}
The integers in \eqref{eq:def-mc} are obtained exactly from the adjugate of
$W$; no floating point ray is constructed.

There are now two series variables with different jobs.  The variable $q$ is
the Ehrhart variable in $F_7(q)$.  The auxiliary formal variable $u$ is a
local $\delta$th root, defined by
\begin{equation}
  q=u^\delta.
  \label{eq:def-u}
\end{equation}
It clears the denominators in \eqref{eq:def-mc}.  If coefficients through
$q^T$ are requested, the internal $u$-array runs through degree
\begin{equation}
  Q=\delta T.
  \label{eq:def-Q}
\end{equation}
Here $T$ is a truncation degree; it is neither the order of the magic square
nor the index of a cone.

With these conventions, Theorem~\ref{thm:quotient-filter} specializes the
generating function of one cone to
\begin{equation}
 G_W(q;\epsilon)=\frac1\delta\sum_{h\in H_W}\,
 \prod_{k=1}^{s}
 \frac{1}{1-\zeta_\delta^{h_k}\,u^{m_k}\,
                  \exp(c_k\epsilon/\delta)},
 \label{eq:character-product}
\end{equation}
where $\epsilon$ is the formal parameter just described.  A negative step
$m_k=-a<0$ is rewritten as a positive step by
\begin{equation}
 \frac1{1-\omega u^{-a}e^{b\epsilon}}
 =\frac{-\omega^{-1}u^{a}e^{-b\epsilon}}
        {1-\omega^{-1}u^{a}e^{-b\epsilon}}.
 \label{eq:step-flip}
\end{equation}
A negative step would otherwise require negative $u$-degrees.  Formula
\eqref{eq:step-flip} moves its monomial into the numerator and replaces it by
a positive step, after which a finite prefix can be computed from low degree
to high degree.  This identity is an exact normalization, not by itself a
speedup; its computational value is that it enables the one-pass recurrence
below.

The signs of the grading steps matter during normalization, whereas the zero
steps require a different constant-term calculation.  Put
\begin{equation}
 \begin{aligned}
 I_+&=\{k:m_k>0\},& I_-&=\{k:m_k<0\},\\
 I_0&=\{k:m_k=0\},& I_{\ne0}&=I_+\mathbin{\dot\cup}I_-,\\
 &&\omega_{h,k}&=\zeta_\delta^{h_k}.
 \end{aligned}
 \label{eq:positive-zero-index-sets}
\end{equation}
After applying \eqref{eq:step-flip}, and absorbing its monomial and exponential
numerator into $M_h(u,\epsilon)$, the $h$th summand is
\begin{equation}
 M_h(u,\epsilon)\,
 \underbrace{\prod_{k\in I_{\ne0}}
   \frac{1}{1-\widetilde\omega_{h,k}u^{|m_k|}
                    e^{\widetilde c_k\epsilon/\delta}}}_{
                    R_h(u,\epsilon)\text{, the $u$-dependent part}}
 \underbrace{\prod_{k\in I_0}
   \frac{1}{1-\omega_{h,k}e^{c_k\epsilon/\delta}}}_{
                    Z_h(\epsilon)\text{, the $u$-free part}}.
 \label{eq:u-dependent-free-split}
\end{equation}
Here $\widetilde\omega_{h,k}=\omega_{h,k}$ and
$\widetilde c_k=c_k$ for a positive step, while both are inverted or negated
as prescribed by \eqref{eq:step-flip} for a negative step.  The prefix
recurrence in the next subsection evaluates $R_h$.  The factor $Z_h$ is a
constant term of generalized Todd type and must be counted separately.

To make this identification explicit, write $b_k=c_k/\delta$ and use the two
standard Todd factors
\begin{equation}
 f(s)=\frac{s}{e^s-1},
 \qquad
 f(s,y)=\frac{1}{1-y(e^s-1)}.
 \label{eq:gtodd-factors}
\end{equation}
If $m_k=0$ and $\omega_{h,k}=1$, then
\begin{equation}
 \frac{1}{1-e^{b_k\epsilon}}
   =-\frac{1}{b_k\epsilon}f(b_k\epsilon);
 \label{eq:singular-todd-factor}
\end{equation}
if $m_k=0$ and $\omega_{h,k}\ne1$, then
\begin{equation}
 \frac{1}{1-\omega_{h,k}e^{b_k\epsilon}}
 =\frac{1}{1-\omega_{h,k}}
  f\!\left(b_k\epsilon,
       \frac{\omega_{h,k}}{1-\omega_{h,k}}\right).
 \label{eq:regular-generalized-todd-factor}
\end{equation}
These are exactly the ordinary and generalized Todd factors of
Xin, Zhang and Zhang~\cite{XinZhangZhangGTodd}.  Define the local pole order
\begin{equation}
 \rho_h=\#\{k\in I_0:\omega_{h,k}=1\}.
 \label{eq:local-pole-order}
\end{equation}
After the explicit scalar and $\epsilon^{-\rho_h}$ are removed,
$\CT_\epsilon$ is the coefficient of $\epsilon^{\rho_h}$ in a product of
\eqref{eq:gtodd-factors}, the exponential numerator $M_h$, and the regular
$u$-dependent product $R_h$.  Thus only degrees
$0,\ldots,\rho_h$ in $\epsilon$ are needed.

The implementation computes this generalized Todd coefficient by exact
truncated inversion and multiplication in the prime field.  This is the same
constant-term reduction as Algorithm CTGTodd of
\cite{XinZhangZhangGTodd}, specialized to the small local orders encountered
here; it is not a new generalized Todd algorithm.  Keeping the two products
in \eqref{eq:u-dependent-free-split} separate is nevertheless essential both
for explaining the method and for stating its operation count correctly.

\begin{remark}
\label{rem:global-laurent}
An individual signed SimpCone contribution may have a pole at
$\epsilon=0$ when one of its rays has grading zero.  Such a contribution is
not, by itself, a univariate Ehrhart series; regularity is recovered only after
the complete signed sum.  For each cone separately, the complete character
average is required to contain only powers $u^a$ with $\delta\mid a$.  After
all cones have been added with their SimpCone signs, every negative Laurent
coefficient in $\epsilon$ is required to vanish.
\end{remark}

\subsubsection*{An order-five cone with positive, negative, and zero steps}

We next carry out one cone calculation in which every branch of the evaluator
is used.  This is important because a cone with no zero step illustrates the
prefix recurrence but says nothing about the generalized-Todd part.  The cone
below was selected by an exact scan of the $8{,}432$ nonzero terms in the
order-five SimpCone decomposition.  The scan found $459$ index-two terms with
both signs of the grading and with zero steps on which the two characters have
different pole orders.

Number the $25$ cell coordinates in row-major order from $0$ to $24$, and
write $25$ for the line-sum coordinate $s$.  Consider
\begin{equation}
 \begin{split}
 J={}&\{2,5,6,8,11,13,17,20,21,24,25\}\\
   ={}&\{(1,3),(2,1),(2,2),(2,4),(3,2),(3,4),
        (4,3),(5,1),(5,2),(5,5),s\}.
 \end{split}
 \label{eq:n5-example-index-set}
\end{equation}
Its SimpCone coefficient is $\sigma_J=1$ and its dual index is $\delta=2$.
For the saturated kernel basis and the small generic polarization recorded
with the source, fraction-free elimination gives
\begin{equation}
 \begin{split}
 m={}&(2,2,2,2,-2,0,0,-2,0,2,2,2,2,-2,0),\\
 c={}&(0,0,0,2,4,2,2,-2,4,0,0,2,0,-2,-4),\\
 H_W={}&\{0,\bar h\},\\
 \bar h={}&(1,1,1,1,0,0,1,1,1,0,0,0,0,1,0).
 \end{split}
 \label{eq:n5-example-data}
\end{equation}
The integer identities $Wm=2e_{15}$ and
$W\bar h\equiv0\pmod2$, together with
$|\det W|=2$, verify these data without numerical linear algebra.

In one-based ray numbering, the three parts of
\eqref{eq:u-dependent-free-split} are
\begin{equation}
 \begin{split}
 I_+&=\{1,2,3,4,10,11,12,13\},\\
 I_-&=\{5,8,14\},\\
 I_0&=\{6,7,9,15\}.
 \end{split}
 \label{eq:n5-example-three-sets}
\end{equation}
The three negative steps are normalized by \eqref{eq:step-flip}.  They give
the shift $u^{2+2+2}=u^6$ and scalar $-1$ for each character; their
exponential shifts cancel because
$-c_5-c_8-c_{14}=0$.  All eleven factors in $R_h$ consequently have step
$2$ and are inserted by the same forward recurrence, with their character
signs and exponential coefficients retained.

The four zero steps show why $Z_h$ must be treated separately.  Since
$c_k/\delta=(1,1,2,-2)$ on $I_0$, the trivial character gives
\begin{equation}
 Z_0(\epsilon)=
 \frac{1}{(1-e^\epsilon)^2(1-e^{2\epsilon})(1-e^{-2\epsilon})},
 \qquad \rho_0=4.
 \label{eq:n5-example-Z0}
\end{equation}
For $\bar h$, the character values on $I_0$ are $(1,-1,-1,1)$, and hence
\begin{equation}
 Z_{\bar h}(\epsilon)=
 \frac{1}{(1-e^\epsilon)(1+e^\epsilon)
          (1+e^{2\epsilon})(1-e^{-2\epsilon})},
 \qquad \rho_{\bar h}=2.
 \label{eq:n5-example-Zh}
\end{equation}
Thus the first character uses four singular ordinary Todd factors.  The
second uses two singular factors and two regular generalized-Todd factors.
Both cases in \eqref{eq:singular-todd-factor}--
\eqref{eq:regular-generalized-todd-factor} occur in the same cone.

For a short calculation take $T=4$, so $Q=8$.  After the shift $u^6$, only
degrees $6$ and $8$ can contribute.  Expanding each $Z_h$ through its local
order $\rho_h$, and applying \eqref{eq:prefix-recurrence} to the eleven
nonzero-step factors, gives Table~\ref{tab:n5-example-arrays}.  The last column
is the exact number of modular multiplicative products.
\begin{table}[htbp]
\centering
\caption{Complete character calculation for the cone in
\eqref{eq:n5-example-index-set} through $T=4$.}
\label{tab:n5-example-arrays}
\small
\begin{tabular}{lrrrrrr}
\toprule
character & $\rho_h$ & singular & regular & $[u^6]\CT_\epsilon$ &
$[u^8]\CT_\epsilon$ & products\\
\midrule
$0$        & 4 & 4 & 0 & $-49/2880$ & $1141/2880$ & 715\\
$\bar h$   & 2 & 2 & 2 & $-1/24$    & $2/3$       & 339\\
\bottomrule
\end{tabular}
\end{table}
The character average removes every $u$-degree not divisible by $2$ and gives
the signed cone contribution
\begin{equation}
 C_J(q)=-\frac{169}{5760}q^3
        +\frac{3061}{5760}q^4+O(q^5).
 \label{eq:n5-example-result}
\end{equation}
An individual signed cone may have rational coefficients; integrality is a
property of the complete signed sum.  The matrix $W$, the polarization, every
intermediate character coefficient, and an independent rational-arithmetic
reconstruction of Table~\ref{tab:n5-example-arrays} are supplied in
\nolinkurl{data/ims5_lrqc_three_part_example.json} and
\nolinkurl{tools/verify_n5_three_part_example.py}.

The complete order-five reconstruction used $T=256$ to discover an order-$81$
recurrence and retain $64$ coefficients for validation.  For this cone, the
exact count for both characters is $59{,}266$ modular products.  Replacing $T=256$ by the certified cone-wise value $T=145$ derived
in the next subsection lowers that count to $33{,}625$, a factor
$1.762\ldots$.  The ratio is slightly smaller than $256/145$ because the
generalized-Todd setup is independent of $T$.

\subsubsection*{A cone-wise denominator bound and the order-five test}

The cone formula also gives a deterministic way to choose a reconstruction
length without first guessing the reduced denominator.  The zero-step Todd
part cannot be omitted from this estimate.  For a fixed character $h$, group
the nonzero-step factors in $R_h$ by their normalized step $a=|m_k|$ and root
$\widetilde\omega_{h,k}$.  Write
\begin{equation}
 r_{J,h}(a,\omega)
 =\#\{k\in I_{\ne0}:|m_k|=a,
                 \widetilde\omega_{h,k}=\omega\}.
 \label{eq:root-step-multiplicity}
\end{equation}
Extracting the coefficient of $\epsilon^{\rho_h}$ can raise the total
denominator power of a group of $r_{J,h}(a,\omega)$ equal factors by at most
$\rho_h$.  Moreover
\begin{equation}
 1-\omega u^a\quad\hbox{divides}\quad
 1-u^{a\delta}=1-q^a
 \label{eq:root-factor-divides-q-binomial}
\end{equation}
because $\omega^\delta=1$.  It follows that a safe $q$-denominator for the
complete character average of cone $J$ is
\begin{align}
 E_{J,a}
   &=\max_{h\in H_{W_J}}
       \max_{\omega:r_{J,h}(a,\omega)>0}
       \bigl(r_{J,h}(a,\omega)+\rho_h\bigr),
       \label{eq:cone-q-exponent}\\
 D_J^{\rm cone}(q)
   &=\prod_{a:\,r_{J,h}(a,\omega)>0
                    \text{ for some }h,\omega}
       (1-q^a)^{E_{J,a}}.
       \label{eq:cone-q-denominator}
\end{align}
The maximum over roots in \eqref{eq:cone-q-exponent}, rather than their sum,
is valid because one copy of $1-q^a$ contains every distinct factor
$1-\omega u^a$.  Repeated factors are recorded by
$r_{J,h}(a,\omega)$, and the additional $\rho_h$ records the possible Todd
derivatives.  Thus \eqref{eq:cone-q-denominator} includes both products in
\eqref{eq:u-dependent-free-split}.

Taking the least common multiple over the signed SimpCone family gives
\begin{equation}
 D_n^{\rm cone}(q)
  =\lcmop_{J\in\mathcal C_n}D_J^{\rm cone}(q)
  =\prod_{d\geq1}\Phi_d(q)^{e_d},
 \qquad
 e_d=\max_{J\in\mathcal C_n}
          \sum_{a:\,d\mid a}E_{J,a}.
 \label{eq:global-cone-lcm}
\end{equation}
This is generally an overdenominator: character averaging and cancellation
between signed cones may reduce it.  It is nevertheless a deterministic
common denominator.  Combined with the codegree-$n$ reciprocity in
Proposition~\ref{prop:codegree}, it gives the finite truncation
\begin{equation}
 T_{\rm cone}
 =\left\lfloor\frac{\deg D_n^{\rm cone}-n}{2}\right\rfloor.
 \label{eq:cone-lcm-truncation}
\end{equation}
Indeed every cyclotomic factor is reciprocal up to a known sign.  Hence
$D_n^{\rm cone}F_n$ is a reciprocal or antireciprocal polynomial, with known
sign, of degree $\deg D_n^{\rm cone}-n$.  In either case its first half
determines its second half exactly.

We evaluated \eqref{eq:global-cone-lcm} on the complete order-five family,
using the same fraction-free elimination as the main calculation.  The
audit reproduced all $8{,}432$ nonzero signed cones, the complete determinant
histogram, and all $14{,}000$ quotient characters.  It independently rebuilt
the saturated integer kernel basis, checked
$|\det W_J|=\delta_J$ for every cone, and checked
$W_Jh\equiv0\pmod{\delta_J}$ for every character.  The distribution of the
number $z_J=|I_0|$ of zero grading steps was
\begin{center}
\begin{tabular}{c|rrrrrrrr}
$z_J$ & 0 & 1 & 2 & 3 & 4 & 5 & 6 & 10\\
\hline
number of cones & 1034 & 1569 & 1785 & 1431 & 1039 & 925 & 456 & 193
\end{tabular}
\end{center}
In particular, the Todd order is not negligible.  For example, the cone with
\begin{equation}
 \begin{split}
 J={}&\{0,1,2,6,8,11,12,18,20,21,25\},\qquad \delta=2,\\
 m={}&(0,2,0,0,2,0,0,2,0,0,0,2,0,0,2)
 \end{split}
 \label{eq:n5-max-pole-cone}
\end{equation}
has ten zero steps.  Its trivial character has $\rho_h=10$ and five equal
nonzero factors, so \eqref{eq:cone-q-exponent} gives
$E_{J,2}=5+10=15$.  Omitting $\rho_h$ would use only the exponent $5$.

Grouping equal steps and quotient roots as in
\eqref{eq:cone-q-exponent} reduces the rigorous common denominator to
\begin{equation}
 \begin{split}
 D_5^{\rm cone}(q)={}&
 \Phi_1^{30}\Phi_2^{30}\Phi_3^{19}\Phi_4^{19}
 \Phi_5^{15}\Phi_6^{15}\Phi_7^2\Phi_8^7\Phi_9^3\Phi_{10}^3,\\
 &\deg D_5^{\rm cone}=296,\qquad T_{\rm cone}=145.
 \end{split}
 \label{eq:n5-safe-cone-lcm}
\end{equation}
Thus the character-aware least common multiple improves the coarse rigorous
bound by the factor $388/145=2.67\ldots$ in truncation length.

For comparison, cancellation in the complete signed sum gives the much
smaller reduced denominator
\begin{equation}
 D_5(q)=\Phi_1^{15}\Phi_2^{10}\Phi_3^7\Phi_4^4
        \Phi_5^4\Phi_7^2\Phi_9,
 \qquad (\deg D_5,\deg N_5)=(81,76).
 \label{eq:n5-actual-reduced-denominator}
\end{equation}

The discovery calculation retained the coefficients through degree
$T=256$.  Degrees $0,\ldots,192$ were used to recover the recurrence, while the
$64$ coefficients in degrees $193,\ldots,256$ formed a held-out tail.  If the
cone-wise common denominator \eqref{eq:n5-safe-cone-lcm} is supplied in advance,
the deterministic identity test needs only $T=145$; if the reduced denominator
\eqref{eq:n5-actual-reduced-denominator} and numerator palindromy are also known,
$T=38$ suffices.  Thus $T=256$ belongs to the discovery calculation and is not
a theoretical lower bound.  For order seven no complete cone-wise denominator
census was used; the final proof instead uses the independently certified
vertex denominator and $T=1256$.

\subsection{Truncation linear in the series degree}

For a nonzero step $a$, set
\begin{equation}
  A(\epsilon)=\omega\exp(c\epsilon/\delta).
  \label{eq:def-A}
\end{equation}
Suppose that, before this factor is inserted, the truncated series is
\begin{equation}
 F^{\mathrm{old}}(u)=\sum_{j=0}^{Q}f^{\mathrm{old}}_j u^j,
 \qquad
 F^{\mathrm{new}}(u)\equiv
 \frac{F^{\mathrm{old}}(u)}{1-A(\epsilon)u^a}
 \pmod{u^{Q+1}}.
 \label{eq:def-old-new}
\end{equation}
Multiplying \eqref{eq:def-old-new} by $1-A(\epsilon)u^a$ and comparing the
coefficient of $u^j$ gives
\begin{equation}
 f^{\mathrm{new}}_j=f^{\mathrm{old}}_j
                    +A(\epsilon)\,f^{\mathrm{new}}_{j-a}.
 \label{eq:prefix-recurrence}
\end{equation}
We set $f^{\mathrm{new}}_k=0$ for $k<0$ and visit $j$ in increasing order, so
the right side is already known.  The symbols ``old'' and ``new'' therefore
refer simply to the coefficient arrays immediately before and immediately
after multiplication by one geometric denominator.

Multiplying geometric series in the straightforward way revisits the same source
coefficient for every multiple of $a$. Recurrence~\eqref{eq:prefix-recurrence}
visits each reachable target degree exactly once. Since we only keep each
coefficient up to degree $\rho_h$, inserting one factor costs
$\delta T \binom{\rho+1}{2}$ multiplications in the prime field; the factor $\binom{\rho+1}{2}$
accounts for the product of two polynomials of degree $\rho_h$. This linear
dependence on $T$ makes it feasible to compute $\IMS_7(m)$ for $m$ up to
$T = 1256$.

For the complete order-seven family, Table~\ref{tab:det-summary} gives mean
square index $3.327876602$, rather than the worst-case value $23^2$.  The mean
pole order over all character terms is $8.3313$, so the mean
active Laurent width is $9.3313$ rather than the dimension-wide width $36$.

For clarity, the complete per-cone calculation is the following.
\begin{enumerate}
  \item Reconstruct $W_J$, $m_k$, and $c_k$ by fraction-free integer
  elimination.
  \item Generate $H_{W_J}$ and require exactly $\delta_J$ distinct elements,
  each satisfying $W_J h\equiv0\pmod{\delta_J}$.
  \item Verify a field element of exact order $\delta_J$ and form the
  character values.
  \item Split the denominator as in \eqref{eq:u-dependent-free-split}, compute
  $\rho_h$, and evaluate the $u$-free factor $Z_h$ as the generalized-Todd
  product \eqref{eq:singular-todd-factor}--
  \eqref{eq:regular-generalized-todd-factor} through degree $\rho_h$.
  \item Normalize the negative steps by \eqref{eq:step-flip}.  Insert the
  resulting factors of $R_h$ by \eqref{eq:prefix-recurrence}, using only
  reachable $u$-degrees through $Q=\delta_JT$.
  \item Average the $\delta_J$ characters.  Require every surviving
  $u$-exponent to be divisible by $\delta_J$, then map $u^{\delta_Jj}$ to
  $q^j$.
  \item Multiply by the SimpCone coefficient $\sigma_J$ and add to the global
  Laurent
  arrays.  After all $J\in\mathcal C_7$, require every negative Laurent layer
  to vanish.
\end{enumerate}

All four reconstruction primes satisfy
\begin{equation}
 p\equiv1\pmod{\lcmop(1,\ldots,23)},
 \qquad \lcmop(1,\ldots,23)=5{,}354{,}228{,}880,
 \label{eq:prime-congruence}
\end{equation}
so every required character value lies in the base field.

\subsection{Prime-independent cone data and simultaneous evaluation in eight fields}

The expensive geometric part of the evaluation is independent of the prime.
For each $J\in\mathcal C_7$, the computation stores $\sigma_J$, the position of
$J$ in the cone list, $\delta_J$, the integer pairings $m_k,c_k$, and every
element of $H_{W_J}$.  Each binary entry contains its own length and the file
records the complete interval of the cone list.  Consequently independently
computed intervals can be checked for disjointness and then added without
relying on filenames.  This separates exact cone geometry from arithmetic in
finite fields and avoids repeating the fraction-free solutions and the
construction of $H_{W_J}$ for every reconstruction prime.

The residues for eight primes below $2^{52}$ occupy the eight components of one
AVX-512 vector.  Radix-$2^{52}$ Montgomery
multiplication~\cite{Montgomery1985} uses the integer IFMA
low and high multiply-add instructions; every reduction, root-order check,
inverse, and recurrence coefficient remains exact integer arithmetic.  The
eight components execute the same character product and truncation indices,
while
their residues and roots remain independent.  Scalar validation runs use the
  same fixed primes one at a time and compare every output coefficient with the
  vector result.  Section~\ref{sec:validation} records the complete small-order
  comparisons and the tests at the maximum index and maximum truncation.

  If a character term has Laurent pole order $\rho$, only the layers
  $0,\ldots,\rho$ can contribute to its constant term.  The final calculation
  therefore groups coefficients first by $q$-degree and stores only the
  $\rho+1$ Laurent coefficients needed for the current character term, rather
  than $36$ coefficients in every case.  This is an exact reindexing of the
  same recurrence: separate scalar and vector calculations compare every
  coefficient before and after the change.  The mean pole order over the
  complete calculation is $8.3313$, so the mean number of stored Laurent
  coefficients falls from $36$ to $9.3313$.  The final calculation also
  omits the eight scalar coefficient arrays needed only for validation.

\section{Where the reduction in work occurs}
\label{sec:work-reduction}

The formulas above are short enough that the scale of the gain can be hidden
by notation.  This section compares the actual objects manipulated by the
main approaches and then follows one cone through the present method.

\subsection{LattE, DecDenu, and CTEuclid as comparison points}

For a simplicial cone generated by a primitive integer matrix $V$, the
ordinary rational generating function has a numerator containing one monomial
for every lattice point in a fundamental parallelepiped.  Direct enumeration
therefore costs $|\det V|$.  Barvinok's method avoids that enumeration, in fixed
dimension, by replacing the cone with a signed collection of cones of smaller
index until all leaves are unimodular~\cite{Barvinok1994}; LattE implements
this route~\cite{DeLoeraHemmeckeTauzerYoshida2004}.  Its practical cost is the
number of recursive nodes and leaves, together with the lattice reductions
used to find them.

DecDenu addresses exactly this recursive cost for denumerant cones.  It uses
LLL-assisted multipliers to make the descendant indices small and is combined
with constant-term elimination in LLLCTEuclid
\cite{XinZhangZhang2024}.  In the favorable model, an index $a$ is replaced by
indices on the scale $a^{(d-1)/d}$ at one level.  This can make the recursive
tree dramatically shorter, but a tree is still constructed.  DecDenu is the
natural route when the quotient is too large to enumerate directly but can be
reduced efficiently.

CTEuclid works in the language of partition analysis rather than starting
from a fixed simplicial cone list.  In Xin's description of the order-six
calculation~\cite{Xin2015}, the decisive intermediate quantity is the number
$nt$ of rational terms produced after the main eliminations.  Dispelling the
slack variables is reported as the most expensive step, with time almost
linear in $nt$.  The three modular $\IMS_6$ computations took about $70$ CPU days
each.  This historical result should not be interpreted as a controlled
benchmark against the present C implementation: the machine, language,
arithmetic, and requested certificates differ.  It does identify the old
bottleneck, namely the proliferation of symbolic terms before specialization.

The present route occupies a different favorable regime.  SimpCone first
produces a signed simplicial list.  For each member, equation~(5.9) of
\cite{XinXuZhang2025} already gives a direct representation with one summand
per element of the \emph{dual} quotient.  When $\delta$ is one, two, or three,
performing those one, two, or three products is cheaper and simpler than
building any unimodular reduction tree.  LRQC therefore stops the geometric
decomposition at this point and specializes the direct formula.

\begin{table}[t]
\centering
\caption{The object whose size controls the main cone-level work.  These
methods are complementary; the last column describes their favorable regime.}
\label{tab:method-comparison}
\footnotesize
\setlength{\tabcolsep}{3pt}
\begin{tabular}{>{\raggedright\arraybackslash}p{0.18\textwidth}
                >{\raggedright\arraybackslash}p{0.27\textwidth}
                >{\raggedright\arraybackslash}p{0.25\textwidth}
                >{\raggedright\arraybackslash}p{0.21\textwidth}}
\toprule
Route & Main intermediate object & Reduction mechanism & Favorable regime\\
\midrule
Fundamental parallelepiped & $|\det V|$ numerator monomials & enumerate representatives & small primal index\\
Barvinok/\newline LattE & signed recursive cone tree & reduce to unimodular leaves & fixed dimension, effective short vectors\\
DecDenu/\newline LLLCTEuclid & LLL-assisted Euclidean tree and constant-term terms & faster index decrease & denumerant structure, medium or large reducible index\\
CTEuclid & rational terms followed by slack removal & Euclidean constant-term elimination & small post-elimination term count $nt$\\
SimpCone--\newline LRQC & $\delta$ quotient characters per signed cone & direct finite Fourier average & small dual index $\delta$\\
\bottomrule
\end{tabular}
\end{table}

\subsection{Order-seven work ledger}

The SimpCone phase emitted $649{,}848{,}846$ terminal signed terms in $192$
independent batches.  Combining terms with the same index set $J$ gave
$174{,}056{,}323$ distinct subsets; deleting those with $\sigma_J=0$ left
$166{,}156{,}916$ nonzero cone terms.  This first
compression is exact signed cancellation before any series coefficient is
computed.

For the surviving list, direct character evaluation requires
\begin{equation}
  \sum_J\delta_J=257{,}827{,}737
  \label{eq:total-character-products}
\end{equation}
character products.  This is only $1.5517$ products per cone.  A fixed sample
chosen to include typical cone terms and all available high-index cases was
also processed by the previous recursive cone evaluator.  That evaluator
visited
$8{,}972{,}047$ recursive nodes and used $14{,}447.771$ summed seconds; LRQC
used no recursive nodes and $3.069$ summed seconds, a factor $4707.45$ on the
same sample.  This number measures the replacement of one cone-evaluation
procedure by the other.  It is not a factor between complete order-seven wall
times, and it is
not inferred from different machines.

Once the quotient products are fixed, the requested degree matters.  A
$q$-prefix through $T$ is represented internally through $u^{\delta T}$.
Complete order-seven probes at $T=64,128,256$ took $586.819$,
$1092.459$, and $2077.289$ seconds per million cone terms.  The least-squares
affine fit is
\begin{equation}
 \tau_7(T)=94.4037+7.7530T
 \quad\hbox{seconds per million cone terms}.
 \label{eq:n7-affine-time-fit}
\end{equation}
Thus a change from $T$ to $T'$ has the measured prediction
\begin{equation}
 \frac{\tau_7(T)}{\tau_7(T')}
 =\frac{94.4037+7.7530T}{94.4037+7.7530T'}.
 \label{eq:n7-T-speed-ratio}
\end{equation}
For example, $T=256$ to $T'=145$ predicts a factor $1.706$, slightly below
$256/145=1.766\ldots$ because setup and memory traffic remain.  As $T$ grows,
the ratio approaches $T/T'$.  This is the precise sense in which lowering
$T$ by a factor $k$ lowers the dominant running time by approximately $k$;
it says nothing about growth with the magic-square order $n$.

Reciprocity reduces the requested information again.  The first two fields
were computed through degrees $512$ and $398$ in order to discover and
independently reproduce the denominator.  Once that denominator and the
palindromic numerator degree were fixed, each additional CRT field needed only
the independent half $0\leq j\leq183$.  Thus the requested coefficient count
falls from $398+1=399$ to $183+1=184$, a factor $399/184=2.168$.  It is a
theorem-driven reduction in work, not parallel speedup.

Finally, storing the prime-independent cone data prevents exact geometry from
being recomputed for every prime.  AVX-512 IFMA evaluates eight modular
recurrences at the same indices in one vector instruction.  Its measured gain
is about
$5$--$6.9$, rather than the ideal factor eight, because loads, reductions,
packing, and nonvector setup remain.  Using the local Laurent width and
assigning independent subintervals in decreasing order of measured cost reduce
memory traffic and the final parallel imbalance.  Table
\ref{tab:speedup-ledger} states the scope of each measured factor.

\begin{table}[t]
\centering
\caption{Measured reductions used in the final implementation.  The baselines
overlap, so the factors must not be multiplied.}
\label{tab:speedup-ledger}
\small
\begin{tabular}{>{\raggedright\arraybackslash}p{0.31\textwidth}
                >{\raggedright\arraybackslash}p{0.24\textwidth}
                >{\raggedright\arraybackslash}p{0.34\textwidth}}
\toprule
Change & measured factor & scope\\
\midrule
Recursive decomposition to direct quotient characters & $4707.45$ & selected cone terms, same machine and input\\
Constant Laurent coefficient, batched Montgomery arithmetic, degree-major storage, reachable congruence classes & $1.528$ cumulative & complete IMS6 at $T=12$\\
Eight separate scalar fields to AVX-512 IFMA & $5.02$--$6.86$ & three fixed sets of cone terms\\
Global to pole-order-local Laurent arrays & $1.802$ & first $2000$ IMS7 cones at $T=1256$ under saturated load\\
Decreasing-cost assignment of subintervals & $1.119$ & computed completion time for the measured interval costs\\
\bottomrule
\end{tabular}
\end{table}

The first line changes the mathematical work representation.  The remaining
lines improve the execution of that reduced representation.  This separation
is why neither ``the same algorithm on more cores'' nor ``a completely new
cone identity'' accurately describes the computation.

\section{Reciprocal reconstruction and rigorous integer recovery}
\label{sec:reconstruction}

\subsection{Constructing a candidate for \texorpdfstring{$F_7(q)$}{F7(q)}}
We computed the truncated series of $F_7(q) \bmod p_i$ up to degree $T$ for the four primes
\begin{equation}
\begin{aligned}
p_1&=2305842973415982001,&\qquad
p_2&=2305842962707524241,\\
p_3&=2305842957353295361,&\qquad
p_4&=2305842930582150961.
\end{aligned}
\label{eq:four-primes}
\end{equation}
Using Corollary~\ref{cor:finite-identity} we then build a candidate for $F_7(q)$.

First we compute the palindromic polynomial
$\widetilde{F}(q) \equiv F_7(q)U_{15}(q) \pmod{p_i}$.
Next we form $F_7(q) \equiv \widetilde{F}(q)/U_{15}(q) \pmod{p_i}$ and reduce this rational
function to its lowest term $N^i/D^i$ by canceling common factors through division
by the cyclotomic polynomials $\Phi_k$. Note that this $D^i$ need not be $D_7$, as
$N_7/D_7$ may reduce further when modulo $p_i$. We take $D^1=D^2$ as a candidate for $D_7(q)$.
The computation for $p_3,p_4$ used a much smaller $T$.

For comparison we include the data for smaller orders: order five gave
$(\deg D_5,\deg N_5)=(81,76)$, order six gave $(144,138)$, and the present work yields
$(373,366)$ for order seven.

Once the denominator $D_7(q)$ is established, we reconstruct $N(q)$ from the congruences
$N^i \equiv N(q) \pmod{p_i}$ via the Chinese remainder theorem. The resulting $N(q)$
appears plausible, but a rigorous proof is still needed.

\subsection{The deterministic finite-prefix certificate}
We need the following counting bound.

\begin{theorem}
\label{thm:count-bound}
For all $n\geq 3$ and $m\geq 0$,
\begin{equation}
 \IMS_n(m)\;\leq\;
 \binom{m+n-1}{n-1}^{\!n-2}
 \binom{m+n-3}{n-3}.
 \label{eq:ims-bound}
\end{equation}
\end{theorem}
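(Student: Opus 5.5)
We prove \eqref{eq:ims-bound} by an injection: a magic square will be determined by a small amount of free data, and we bound the number of choices for that data. The number of nonnegative integer vectors of length $k$ with sum $m$ is $\binom{m+k-1}{k-1}$. Row $i$ of a square counted by $\IMS_n(m)$ is such a vector with $k=n$, so it has at most $\binom{m+n-1}{n-1}$ possibilities.

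The plan is to choose rows $1,\dots,n-2$ freely, and then show that rows $n-1$ and $n$ together leave at most $\binom{m+n-3}{n-3}$ choices.

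Once rows $1,\dots,n-2$ are fixed, the column sums determine each column total $s_j=x_{n-1,j}+x_{nj}$. The two diagonal conditions fix the two quantities
\[
x_{n-1,n-1}+x_{nn}
\quad\text{and}\quad
x_{n-1,2}+x_{n,1}.
\]
So the unknowns are the entries $x_{n-1,j}$ for $1\le j\le n$, with $x_{nj}=s_j-x_{n-1,j}$ and $0\le x_{n-1,j}\le s_j$. These unknowns are subject to three linear conditions:
\begin{itemize}
\item the row-sum condition $\sum_j x_{n-1,j}=m$ (row $n$ then sums to $m$ automatically);
\item the main-diagonal condition, which gives $x_{n-1,n-1}-x_{n-1,n}=c_1$;
\item the antidiagonal condition, which gives $x_{n-1,2}-x_{n-1,1}=c_2$.
\end{itemize}
Here $c_1,c_2$ are known constants.

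From these, $x_{n-1,n-1}$ is determined by $x_{n-1,n}$, and $x_{n-1,2}$ is determined by $x_{n-1,1}$. The remaining free coordinates are $x_{n-1,1}$, $x_{n-1,3},\dots,x_{n-1,n-2}$, and $x_{n-1,n}$, which is $n-2$ coordinates. Since $n\ge 3$, the indices $1,2,n-1,n$ cover the special entries; for $n=3$ the coincidences $2=n-1$ must be checked separately.

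These free coordinates satisfy one affine equation. After substitution it reads
\[
2x_{n-1,1}+2x_{n-1,n}+\sum_{j=3}^{n-2}x_{n-1,j}=m-c_1-c_2 .
\]
Every term is nonnegative, so each coordinate is bounded. The number of nonnegative solutions of $\sum a_jy_j=M$ with $n-2$ variables and all $a_j\ge1$ is at most the number of nonnegative solutions of $\sum y_j\le M$ in $n-3$ variables: the last variable is determined by the others. We also need the range of $M$. Every term in the sum is an actual entry or a doubled entry bounded by row $n-1$'s total $m$, so the count is at most the number of nonnegative $(n-3)$-vectors with sum at most $m$, namely $\binom{m+n-3}{n-3}$.

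Care is needed to pick the determined variable among those with coefficient $1$, or else to argue directly that the solution set injects into $\{y\in\N^{n-3}:\sum y\le m\}$. I plan to drop $x_{n-1,n}$. The remaining $n-3$ coordinates $x_{n-1,1},x_{n-1,3},\dots,x_{n-1,n-2}$ are entries of row $n-1$, which is nonnegative with sum $m$, so their sum is at most $m$. This gives the injection directly.

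The main obstacle is the small and degenerate cases. When $n=3$, the antidiagonal entry of row $2$ is $x_{22}$, which equals the main-diagonal entry $x_{n-1,n-1}$. The two diagonal constraints then share a variable, and the claimed bound is $\binom{m}{0}=1$. I would handle $n=3$ separately: row $2$ must be determined by rows $1$ and the constraints, which follows from the same linear algebra with $x_{22}$ fixed by both diagonals. For $n=4$ the indices $\{1,2\}$ and $\{3,4\}$ are disjoint, so the general argument applies.

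Multiplying the $\binom{m+n-1}{n-1}^{n-2}$ choices for the first $n-2$ rows by the at most $\binom{m+n-3}{n-3}$ completions yields \eqref{eq:ims-bound}.
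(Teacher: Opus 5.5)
Your argument is correct and is essentially the paper's proof. In both, the first $n-2$ rows are fixed, row $n$ is forced by the column totals, and the admissible rows $n-1$ are injected into nonnegative $(n-3)$-vectors of sum at most $m$; the case $n=3$ is settled by full rank of the three equations. The only difference is a mirror-image choice of free coordinates. The paper keeps $y_3,\dots,y_{n-1}$ free and solves for the pair $y_1,y_2$, while you keep $x_{n-1,1},x_{n-1,3},\dots,x_{n-1,n-2}$ free and solve for the pair $x_{n-1,n-1},x_{n-1,n}$. Your intermediate detour through the weighted equation with right side $m-c_1-c_2$ is not needed, and your final step rightly bypasses it by bounding the retained coordinates directly by the row sum $m$.
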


\begin{proof}
We first fill the first $n-2$ rows. Each of these rows is a weak composition of $m$
into $n$ parts, so the number of possible choices is at most
\begin{equation}
  \binom{m+n-1}{n-1}^{\!n-2}.
  \label{eq:rows-bound}
\end{equation}
Let $y=(y_1,\dots,y_n)$ be the penultimate row.  Once the first $n-2$ rows
and $y$ are chosen, the final row is forced by the column sums; therefore it
suffices to bound the number of admissible vectors $y$.

The row sum of $y$ together with the equations coming from the two diagonals imposes three linear conditions on $y$.  For instance,
when $n=7$ and the first five rows are already fixed, these three conditions
take the form
\begin{equation}
  \sum_{j=1}^{7} y_j = m,
  \qquad
  y_6-y_7 = t_1,
  \qquad
  y_2-y_1 = t_2,
  \label{eq:three-equations}
\end{equation}
where $t_1,t_2$ are constants determined by the fixed rows.  For general
$n$ the pattern is the same, with the difference $y_{n-1}-y_n$ replacing
$y_6-y_7$.

Assume $n\geq 4$ and choose
$y_3,\dots,y_{n-2},y_{n-1}$ as the $n-3$ free variables.  The equation
$y_{n-1}-y_n = t_1$ then determines $y_n$, and the remaining two variables
$y_1,y_2$ are uniquely determined (or none exist) by $y_2-y_1 = t_2$
together with the row sum. The free variables are nonnegative entries of
$y$, so their sum is at most $m$; consequently they can be chosen in at
most $\binom{m+n-3}{n-3}$ ways.

When $n=3$, the three linear equations have full rank, so there is at most
one admissible $y$, and the bound $\binom{m+n-3}{n-3}=1$ remains valid.
Multiplying the bound~\eqref{eq:rows-bound} for the first $n-2$ rows by the
bound for $y$ yields the stated inequality~\eqref{eq:ims-bound}.
\end{proof}

To apply Corollary~\ref{cor:finite-identity} coefficient wise on a large scale,
we set \begin{equation}
 B_m = \binom{m+6}{6}^{\!5} \binom{m+4}{4}.
 \label{eq:prefix-count-bound}
\end{equation}
The maximum over $0 \le m \le 1256$ is $B_{1256}$, a $299$-bit integer.
Exact expansion of the candidate $N_7/D_7$ yields nonnegative coefficients
bounded by $B_m$ throughout this range; the largest such coefficient occupies
$287$ bits.

The complete certificate computation evaluates the LRQC sum up to degree
$1256$ using eight $52$-bit primes simultaneously.
Each prime is certified by the deterministic Miller--Rabin test with the known
bases for $64$-bit integers, and its predecessor $p-1$ is divisible by
$\operatorname{lcm}(1,\dots,23)$. This guarantees that every multiplicative
character required by the cone census exists.
The product $Q$ of the first six primes has $312$ bits and satisfies
\begin{equation}
 \frac{Q}{2B_{1256}} = 7617.63155499\ldots > 1.
 \label{eq:ifma-crt-bound}
\end{equation}
The two remaining prime fields provide redundant checks.

The certificate partitions the complete list of $166{,}156{,}916$ signed
SimpCone terms into $512$ disjoint intervals.

\begin{theorem}[Deterministic identity certificate]
\label{thm:deterministic-identity}
For every integer $m$ with $0\leq m\leq1256$ and every one of the eight
certificate primes $p$, the complete LRQC sum satisfies
\[
 [q^m]F_7(q)\equiv
 [q^m]\frac{N_7(q)}{D_7(q)}\pmod p.
\]
Consequently $F_7=N_7/D_7$ in $\mathbb Z(q)$, and the denominator in
\eqref{eq:intro-denominator} is the reduced denominator.
\end{theorem}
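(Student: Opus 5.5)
The congruences in the first sentence are the computational content of the certificate. The plan is to treat them as established by the $512$-interval LRQC run, and then derive the two consequences. That run has four components. The complete list $\mathcal C_7$ of signed SimpCone terms is summed, with the intervals checked for disjointness and exhaustiveness. Each cone contribution is evaluated through Theorem~\ref{thm:quotient-filter}, with $|H_{W_J}|=\delta_J$ verified cone by cone. Every negative Laurent layer is required to vanish after the global sum, as in Remark~\ref{rem:global-laurent}. Finally, the resulting residues of $[q^m]F_7$ are compared with the residues of the exact power-series expansion of $N_7/D_7$. The only mathematical point to record is that each reduction is legitimate. Every prime satisfies $\operatorname{lcm}(1,\dots,23)\mid p-1$, so the required roots of unity exist in $\mathbb F_p$. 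The denominators of the cone terms, namely $\delta_J\le23$, the Todd coefficients and $\sigma_J$, are products of small integers and hence invertible modulo $p$. It follows that the LRQC sum modulo $p$ is the reduction of the rational identity, whose constant term equals the integer $\IMS_7(m)$.

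The lift from congruences to integers uses only the first six primes. Fix $m\le1256$. By Theorem~\ref{thm:count-bound}, $0\le \IMS_7(m)\le B_m\le B_{1256}$. The exact expansion shows that $g_m:=[q^m]N_7/D_7$ is an integer with $0\le g_m\le B_m$; it is an integer because $D_7$ has constant term $1$. Hence $|\IMS_7(m)-g_m|\le B_{1256}<Q/2$ by \eqref{eq:ifma-crt-bound}. The difference is divisible by each of the six primes, hence by their product $Q$, so it vanishes. This gives \eqref{eq:finite-identity-prefix}, and Corollary~\ref{cor:finite-identity} then yields $F_7=N_7/D_7$ in $\mathbb Z(q)$. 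The corollary relies on $D_7\mid U_{15}$, which is immediate from the displayed exponents since all of them are at most $35$, and on the palindromy of $N_7$.

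For reducedness, the plan is to show $\gcd(N_7,D_7)=1$. Since $D_7$ is a product of $\Phi_r$ with $r\le15$, it suffices to check $\Phi_r\nmid N_7$ for each $r\le15$ occurring in $D_7$. The check is exact remainder division over $\mathbb Z$, which is valid because $\Phi_r$ is monic. Equivalently, one can verify $N_7(\zeta_r)\ne0$ by reducing $N_7$ modulo $\Phi_r$ and seeing a nonzero remainder. A fallback is to exhibit a single prime $p$ and a root $\zeta$ of $\Phi_r$ in $\mathbb F_p$ with $N_7(\zeta)\not\equiv0$. This suffices because a common factor over $\mathbb Z$ persists modulo $p$.

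The main obstacle is the first step. One must be sure the modular sum really equals $F_7$ modulo $p$. This requires that the signed SimpCone family is exactly the decomposition, and that the one-parameter specialization $\beta$ is generic enough that no cone term has a pole of order higher than what is tracked. It is also the reason the run must cover degrees up to $1256$ rather than $183$. I would handle the first issue by appealing to the SimpCone identity together with the regularity check on the negative Laurent layers. I would handle the genericity issue by noting that every $c_k\ne0$ on $I_0$, which is verified cone by cone, so each local pole order is exactly $\rho_h$. The integer lift and the gcd step are routine once these points are granted.
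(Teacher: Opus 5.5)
Your proposal is correct and follows the paper's proof essentially step for step. Both rest on the validity of the modular LRQC sum, which comes from the disjoint and exhaustive cone intervals and the admissible characters. Both then use the six-prime lift via Theorem~\ref{thm:count-bound} and \eqref{eq:ifma-crt-bound}, followed by Corollary~\ref{cor:finite-identity}. Reducedness is proved in both by exact division of $N_7$ by each $\Phi_r$ ($r\le15$).

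One side remark is misattributed. The run must reach degree $1256$ rather than $183$ only because the proof can use the certified common denominator $U_{15}$, of degree $2520$, and not the still-unproved candidate $D_7$. It has nothing to do with the genericity or pole orders of the specialization.
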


\begin{proof}
The signed SimpCone identity together with the quotient-character filter computes
$[q^m]F_7$ in each prime field.  The intervals are disjoint and together cover
all $|\mathcal C_7|$ cone terms; every quotient index is at
most $23$, and the computation reports no omitted characters and no nonintegral
exponents. Hence the congruences obtained are valid for the true coefficients.

For $0\leq m\leq1256$, write
$f_m=[q^m]F_7$ and $g_m=[q^m]N_7/D_7$.
Theorem~\ref{thm:count-bound} and the exact expansion of the candidate give
$0\leq f_m,g_m\leq B_m\leq B_{1256}$.
The congruences for the first six primes imply that $Q$ divides $f_m-g_m$,
whereas \eqref{eq:ifma-crt-bound} gives
$|f_m-g_m|\leq B_{1256}<Q/2$.  Hence $f_m=g_m$ throughout the stated range.
Corollary~\ref{cor:finite-identity} now gives
$F_7=N_7/D_7$ in $\mathbb Z(q)$.

Finally, exact polynomial division over $\mathbb{Z}$ gives a nonzero remainder
when $N_7$ is divided by each of $\Phi_1, \dots, \Phi_{15}$.  Because these
cyclotomic polynomials are precisely the irreducible factors of $D_7$, the
numerator and denominator are coprime in $\mathbb{Z}[q]$; hence the fraction is
reduced.  As an additional check, a polynomial gcd computation modulo
$1000000007$ also returns degree zero.
\end{proof}

\section{The exact order seven series}
\label{sec:result}

\begin{theorem}[Computer-assisted result]
\label{thm:main-result}
Theorem~\ref{thm:intro-main} holds: the reduced Ehrhart series of weak magic
squares of order seven is given by \eqref{eq:intro-f7} and
\eqref{eq:intro-denominator}, the numerator $N_7$ is palindromic of degree
$366$ with positive integer coefficients, is strictly unimodal and is not
real-rooted, while the denominator has degree $373$.  In particular,
\begin{equation}
\begin{aligned}
N_7(q)={}&1+671q+667331q^2+222060620q^3
+33043658791q^4\\
&+2699129214788q^5+139199065987956q^6+\cdots+q^{366},
\end{aligned}
\label{eq:numerator-head}
\end{equation}
and the central coefficient is
\begin{equation}
 a_{183}=3018619656915957582751639202183157957264300815992933749937910.
 \label{eq:central-coefficient}
\end{equation}
\end{theorem}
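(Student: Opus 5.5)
The identity $F_7=N_7/D_7$ and the reducedness of this fraction are already given by Theorem~\ref{thm:deterministic-identity}. The plan is to take that as the backbone and verify the remaining qualitative assertions about $N_7$ and $D_7$ by finite exact computations on integer polynomials.

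First, I would invoke Theorem~\ref{thm:deterministic-identity}. Its proof combines the prefix congruences for the first six certificate primes with the bound of Theorem~\ref{thm:count-bound} and with Corollary~\ref{cor:finite-identity}. This gives $F_7=N_7/D_7$ in $\mathbb Z(q)$, and the cyclotomic divisibility tests show that the fraction is reduced. The degree of $D_7$ follows by summing exponents weighted by Euler's function. Since $35+\sum_{r\ge2}e_r\varphi(r)=373$, where $e_r$ are the exponents in \eqref{eq:intro-denominator}, the denominator has degree $373$.

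Palindromy is forced by Proposition~\ref{prop:codegree}. We have $F_7(q^{-1})=-q^7F_7(q)$. Each $\Phi_r$ with $r\ge2$ is palindromic, and $(1-q)^{35}$ is antipalindromic of odd exponent, so $D_7(q^{-1})=-q^{-373}D_7(q)$. Substituting these into $N_7=F_7D_7$ gives $N_7(q^{-1})=q^{-366}N_7(q)$. This also fixes $\deg N_7=373-7=366$, given that $a_0=\IMS_7(0)=1\neq0$.

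The remaining claims are read off the exact integer vector $(a_0,\dots,a_{183})$ obtained by expanding $F_7D_7$. These claims are positivity, strict unimodality, the displayed head coefficients, and the value of $a_{183}$. Positivity and strict unimodality reduce to the $183$ integer comparisons $0<a_0<a_1<\dots<a_{183}$. The head coefficients can also be sanity-checked: for example, $a_1=\IMS_7(1)-[q^1]D_7^{-1}$, and $\IMS_7(1)$ is the number of $7\times7$ permutation matrices meeting both diagonals exactly once.

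For non-real-rootedness, I would pick an elementary certificate that avoids numerical root-finding. The first try is Newton's inequalities: a real-rooted polynomial with positive coefficients has log-concave coefficients, with $a_j^2\ge a_{j-1}a_{j+1}(1+1/j)(1+1/(366-j))$. I would search near the ends for an index $j$ where this fails in exact arithmetic. If no such index appears, the fallback is a Sturm sequence computed over $\mathbb Q$, which counts fewer than $366$ distinct real roots of $N_7$. Because $N_7$ is reduced (squarefree-ness can be checked by $\gcd(N_7,N_7')$), fewer than $366$ distinct real roots certifies that not all roots are real.

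I expect this last step to be the main obstacle. Newton's inequalities may hold for every $j$, since the coefficients here are extremely log-concave. A Sturm sequence of degree $366$ with coefficients of roughly $200$ bits is feasible but must be carried out exactly. A cheaper alternative would be an interval-arithmetic certificate isolating a single non-real root of $N_7$.
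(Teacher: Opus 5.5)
Your proposal is correct and follows essentially the same route as the paper: Theorem~\ref{thm:deterministic-identity} gives the identity and reducedness, the coefficient claims are exact integer checks, and non-real-rootedness is certified by Newton's inequality. Your first attempt already succeeds at $j=1$, where $a_1^2\cdot 365=164{,}337{,}965<488{,}486{,}292=2\cdot 366\,a_0a_2$; even plain log-concavity fails there, since $671^2<667331$, so the worry about log-concavity and the Sturm fallback are unnecessary.
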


Since the reduced denominator contains cyclotomic factors of every order
from $1$ through $15$, the exact quasiperiod of $\IMS_7$ is
$\lcmop(1,\ldots,15)=360360$, and the vertex denominator upper bound
\eqref{eq:period-bound} is attained.  Expanding \eqref{eq:intro-f7} gives
Table~\ref{tab:first-series}; these values were certified independently
before the high degree reconstruction, as described in
Section~\ref{sec:validation}.

\begin{table}[t]
\centering
\caption{The first exact order seven counting values.}
\label{tab:first-series}
\small
\begin{tabular}{r@{\qquad}r}
\toprule
$m$ & $\IMS_7(m)$\\
\midrule
0 & \texttt{1} \\
1 & \texttt{656} \\
2 & \texttt{657370} \\
3 & \texttt{212120004} \\
4 & \texttt{29781861219} \\
5 & \texttt{2226279144154} \\
6 & \texttt{102052863971368} \\
7 & \texttt{3168733319782350} \\
8 & \texttt{71630516092697051} \\
9 & \texttt{1244091157559961344} \\
10 & \texttt{17300201208805837012} \\
11 & \texttt{198928846471338676112} \\
12 & \texttt{1940661219477423974769}
\\
\bottomrule
\end{tabular}
\end{table}

\subsection{Exact volume}

The same signed cone terms were evaluated by the algebraic volume formula of Xin,
Xu, Zhang and Zhang~\cite{XinXuZhangZhang2024}.  A fraction-free
Bareiss solve~\cite{Bareiss1968} obtains the grading and polarization pairings with one common
denominator, the required constant term is evaluated exactly, and the signed
contributions are merged in a balanced binary tree.
Two runs with different admissible polarizations gave the
identical value
\begin{equation}
\label{eq:volume}
\begin{aligned}
\vol(\MS_7)&=\frac{N_{\mathrm{vol}}}{D_{\mathrm{vol}}},\\
N_{\mathrm{vol}}&=\text{\footnotesize 7189519127195562944037080336310144806517317560721967911},\\
D_{\mathrm{vol}}&=\text{\footnotesize 101830900869958538923266558909415414413173428632135558758400000000000000000}.
\end{aligned}
\end{equation}
As an exact final check on Theorem~\ref{thm:main-result},
\begin{equation}
 \lim_{q\to1}\,(1-q)^{35}F_7(q)=34!\,\vol(\MS_7),
 \label{eq:volume-identity}
\end{equation}
and the two sides of \eqref{eq:volume-identity} agree as exact rational
numbers.

\subsection{Independent checks}
\label{sec:validation}
The proof of Theorem~\ref{thm:deterministic-identity} consists of the complete
calculation of the finite prefix over every signed cone, combined with the
integer bounds and the common-denominator theorem; it does not infer the full
result from a sample of cones.  Nevertheless, several independent computations
were used to test different parts of the calculation:
\begin{itemize}
  \item complete lower-order runs, including the published order-six series,
    agree coefficientwise;
  \item a separate column-based dynamic program agrees for the first few
    order-seven coefficients;
  \item two complete vertex enumerations yield the same denominator census;
  \item two admissible polarizations give the same exact volume;
  \item an independently computed fifth $61$-bit prime reproduces the reduction
    of the displayed rational function.
\end{itemize}
These are consistency checks, not substitutes for the complete certificate.

\subsection{Computational cost}
\label{sec:performance}
The principal saving occurs before parallel execution.  On a fixed collection
of $10{,}342$ cone terms, the earlier recursive evaluator created
$8{,}972{,}047$ secondary decomposition nodes and required $14{,}447.771$ total
seconds, whereas direct quotient-character evaluation required $3.069$ total
seconds and generated no secondary cones --- a factor of $4707.45$ in this
kernel comparison.  Both procedures operated on the same cone inputs, but this
ratio is not an end-to-end speedup and should not be assumed for other cone
families.

For order seven, the recorded phases from a fresh SimpCone decomposition
through the four-field reconstruction and two direct checks sum to $7.92$~hours.
The separate degree-$1256$ denominator certificate required an additional
$8.36$~hours; these phases were not timed as a single uninterrupted run.  For a
fixed cone family, the dominant recurrence work is linear in~$T$.  Hence
reducing $T$ by a factor~$k$ reduces this portion by approximately the same
factor when the recurrence term dominates, with the fixed cost of Todd
computations and scheduling overhead accounting for any departure from exact
proportionality.  Using more processors shortens the wall-clock time for the
independent cone intervals, but it does not explain the elimination of
recursive cone decomposition.

\section{The exact order eight series}
\label{sec:order-eight}

This section gives the mathematical order-eight conclusion and the finite
certificate supporting it.  Detailed computational and reproducibility
information is given in the accompanying
\emph{Computational report for the order-seven and order-eight magic-square
Ehrhart series}.

\subsection{The common denominator and the finite identity}

The affine dimension of $\MS_8$ is $47$.  Equivalently, the homogenized input
has rank $17$ on $65$ variables and hence cone dimension $48$.
Proposition~\ref{prop:codegree} therefore gives codegree $8$ and
\begin{equation}
 F_8(q^{-1})=q^8F_8(q).
 \label{eq:reciprocity-eight}
\end{equation}
For a face $f$ of a rational polytope, write
\[
 h(f)=\min\{h>0:h\,\operatorname{aff}(f)\cap\mathbb Z^{64}\ne\varnothing\}
\]
for its primitive affine denominator.
\begin{proposition}[Face-index pole bound]
\label{prop:face-index-eight}
Let $p_d$ be the pole order of an Ehrhart series at a primitive $d$th root of
unity, and put
\[
 M_d=1+\max\{\dim f:d\mid h(f)\}.
\]
Then $p_d\leq M_d$, with an empty maximum interpreted as $M_d=0$.
\end{proposition}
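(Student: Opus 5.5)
We will prove this through Stanley's decomposition of the homogenized cone into relatively open faces, combined with the quasipolynomial structure of each face's Ehrhart function. Write $\operatorname{relint}$ for relative interiors. The dilates decompose as disjoint unions
\[
 tP=\bigsqcup_{f}\operatorname{relint}(tf),
\]
so
\[
 \operatorname{Ehr}_P(q)=1+\sum_{f\neq\varnothing}\sum_{t\geq1}\#\bigl(\operatorname{relint}(tf)\cap\Z^{64}\bigr)q^t .
\]
It therefore suffices to prove the bound for each nonempty face separately. For a face $f$ we must show that the open-face series $E^\circ_f(q)=\sum_{t\ge1}\#(\operatorname{relint}(tf)\cap\Z^{64})q^t$ has a pole of order at most $\dim f+1$ at a primitive $d$th root of unity, and no pole there at all when $d\nmid h(f)$. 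Summing over faces then gives $p_d\le M_d$, because the pole order of a sum is at most the maximum of the summands' pole orders. If no face qualifies, we obtain $p_d=0=M_d$.

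The key observation is a support statement. If $tf$ contains a lattice point, then $t\,\operatorname{aff}(f)$ meets $\Z^{64}$. We claim the set of such $t$ is exactly $h(f)\Z_{>0}$. Indeed, $\operatorname{aff}(f)$ is a rational affine space, and
\[
 \{t\in\Z: t\operatorname{aff}(f)\cap\Z^{64}\ne\varnothing\}
\]
is a subgroup of $\Z$. To see closure under addition, note that if $x\in s\operatorname{aff}(f)$ and $y\in t\operatorname{aff}(f)$ are integral, then $x+y\in(s+t)\operatorname{aff}(f)$, because $\operatorname{aff}(f)$ is an affine space and $x/s$, $y/t$ lie in it, so their barycentric combination with weights $s/(s+t)$ and $t/(s+t)$ does too. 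Negatives are handled the same way, and $0$ is trivial. The subgroup is generated by $h=h(f)$. Hence $E^\circ_f(q)$ is a power series in $q^{h}$: $E^\circ_f(q)=G_f(q^{h})$.

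Next we rescale. Put $f'=h f$. Every lattice point of $t f'$ lies in the lattice coset $\Lambda=(t h\operatorname{aff}(f))\cap\Z^{64}$, whose direction lattice is fixed. Choose an integral point of $h\operatorname{aff}(f)$ and an affine lattice isomorphism. This identifies $f'$ with a rational polytope $g$ of dimension $k=\dim f$ inside $\R^k$, with the ambient lattice becoming $\Z^k$, and $G_f(x)=\sum_{t\ge1}\#(\operatorname{relint}(tg)\cap\Z^k)x^t$. Moreover $g$ has affine denominator $1$, in the sense that its affine hull is now all of $\R^k$. By standard Ehrhart theory for rational polytopes, $G_f(x)$ is rational in $x$ with poles only at roots of unity, each of order at most $k+1$: the degree of the quasipolynomial is $k$, and the open-polytope version follows by reciprocity, or directly from a half-open triangulation as in the proof of Proposition~\ref{prop:vertex-common-denominator}. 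Substituting $x=q^{h}$, a pole of $E^\circ_f$ at $q=\zeta$ requires $\zeta^{h}$ to be a pole of $G_f$, hence a root of unity. A primitive $d$th root $\zeta$ with $d\nmid h$ still gives a root of unity $\zeta^h$, so this step alone does not finish the argument.

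This is the main obstacle. The rescaling alone only shows poles of order at most $\dim f+1$ at all roots of unity, and the condition $d\mid h(f)$ must be extracted by inducting on faces. We will run the argument on all faces simultaneously, via Möbius inversion over the face lattice. Poles of $G_f$ at $x=\xi\ne1$ come from the vertex denominators of $g$. Using inclusion--exclusion with half-open decompositions, the top-order contribution at $\xi$ is governed by faces of $g$ whose dilates avoid the lattice except at multiples of $\operatorname{ord}\xi$. So the pole of $E^\circ_f$ at $\zeta$ of order $j$ should be attributable to some face $f_1\subseteq f$ with $\dim f_1\ge j-1$ and $\operatorname{ord}(\zeta)\mid h(f_1)$, where the denominator is now measured in the original lattice. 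Concretely, we will induct on $\dim f$. We write $E^\circ_f=\operatorname{Ehr}_f-\sum_{f_1\subsetneq f}E^\circ_{f_1}$ and use the known fact (Beck--Robins, Chapter~3; McMullen) that the coefficient of $t^{j-1}$ in the Ehrhart quasipolynomial of $f$ has period dividing $\operatorname{lcm}$ of $h(f_1)$ over faces with $\dim f_1\ge j-1$. A pole of order $j$ at a primitive $d$th root of unity means exactly that some coefficient of degree $\ge j-1$ has a nonconstant $d$-periodic part. McMullen's theorem is the step we expect to lean on hardest: it bounds the period by the lcm of the relevant $h(f_1)$, whereas we need the stronger statement that each primitive $d$ divides a single $h(f_1)$. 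We plan to obtain this refinement from the previous paragraph's support argument applied to each $E^\circ_{f_1}$ separately: $E^\circ_{f_1}$ is a series in $q^{h(f_1)}$, so its $d$-periodic part can be nontrivial only if it has a pole there, and by induction this happens only when $d\mid h(f_2)$ for a face $f_2$ of adequate dimension.
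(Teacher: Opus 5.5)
\paragraph{There is a genuine gap.} The step that carries all of the content is that a pole of order $j$ at a primitive $d$th root must be charged to a \emph{single} face $f_1$ with $d\mid h(f_1)$ and $\dim f_1\geq j-1$. You only assert this (``should be attributable''), and none of the tools you assemble can establish it.

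\begin{enumerate}[(i)]
\item Your opening reduction aims at a false statement. For $P=[0,\tfrac12]\subset\R$ we have $h(P)=1$. Yet $\#\{y\in\Z:0<y<t/2\}=\lfloor (t-1)/2\rfloor$, so $E^\circ_P(q)=q^3/\bigl((1-q)(1-q^2)\bigr)$ has a pole at $q=-1$. The correct total pole order appears only after adding the vertex $\{\tfrac12\}$, where $h=2$. Open-face series therefore do not localize poles to faces with $d\mid h(f)$, and rescaling $f$ to $g$ cannot change this.
\item McMullen's theorem gives only $d\mid\lcmop\{h(f_1):\dim f_1\geq j-1\}$.
\item The fact that $E^\circ_{f_1}$ is a series in $q^{h(f_1)}$ says nothing at roots $\zeta$ with $\zeta^{h(f_1)}\neq1$.
\item Half-open triangulations yield only vertex-type bounds. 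A simplex all of whose vertices have denominators divisible by $d$ gives the bound $\dim+1$ at primitive $d$th roots even when those vertices span no face of $P$. The sharper bound arises only from cancellation between simplices.
\item The induction through $E^\circ_f=\operatorname{Ehr}_f-1-\sum_{f_1\subsetneq f}E^\circ_{f_1}$ needs the single-face bound for the closed polytope $f$. For $f=P$ this is the proposition itself, so the argument is circular.
\end{enumerate}
Concretely, nothing in your argument excludes a pole of order $j$ at a primitive sixth root when the $(j-1)$-faces have $h$-values $2$ and $3$ but none divisible by $6$. McMullen allows this case; the proposition forbids it.

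\paragraph{What is missing.} You need an \emph{additive} splitting of the Ehrhart quasipolynomial into face terms, each equal to $m^{\dim f}$ times a periodic function of period dividing $h(f)$. The paper's proof rests on exactly this. By the local Euler--Maclaurin formula of Berline and Vergne, $L(m)=\sum_f\mu_f(m)\operatorname{vol}(f)\,m^{\dim f}$. Here $\mu_f(m)$ depends only on the transverse cone of $mP$ along $mf$ up to lattice translation, so it has period dividing $h(f)$. Each $\sum_m\mu_f(m)m^{\dim f}q^m$ then has denominator dividing $(1-q^{h(f)})^{\dim f+1}$, and summing over faces gives $p_d\leq M_d$ at once. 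The passage from an lcm to a single $h(f)$ is automatic, because distinct faces contribute separate summands. Your open-face counts are not of this shape: their lower-degree coefficients carry periods inherited from subfaces. So the proof needs Berline--Vergne, or an equivalent local formula, as an input; the face decomposition together with McMullen's theorem is not enough.
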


\begin{proof}
Write the Ehrhart quasipolynomial as
$L(m)=\sum_{k=0}^{r}c_k(m)m^k$.  By the local Euler--Maclaurin formula of
Berline and Vergne~\cite[Proposition~29 and Corollary~30]{BerlineVergne2007},
$c_k$ is a finite sum of contributions from $k$-faces, and the contribution of
$f$ has period dividing $h(f)$.  If $u(m)$ has period $h$, then
$\sum_{m\geq0}u(m)m^kq^m$ has denominator dividing $(1-q^h)^{k+1}$.
Consequently a primitive $d$th root can occur in the contribution of $f$ only
if $d\mid h(f)$, and then with pole order at most $k+1=\dim f+1$.
Summing finitely many face contributions cannot increase the largest of these
orders.
\end{proof}

The complete face-index certificate combines these local bounds with its
orderwise compatibility tests and gives the certified pole upper-bound vector
\begin{equation}
\begin{aligned}
 Q_{598}(q)&=\prod_{d=1}^{17}\Phi_d(q)^{E_d},\\
 (E_1,\ldots,E_{17})
  &=(48,30,40,23,13,12,9,8,7,6,5,4,4,2,2,1,1).
\end{aligned}
\label{eq:q598-eight}
\end{equation}
Here $p_d\leq E_d$ for $1\leq d\leq17$, and the certificate excludes a
surviving pole at every primitive order $d\geq18$.  It does not assert
$M_d=0$ there.  Thus it is a pole-support certificate, not a complete
enumeration or exclusion theorem for the vertex denominators of $\MS_8$.

This calculation does not enumerate all faces of $\MS_8$.  The order-$1$
bound comes from the ambient dimension, order $3$ is treated by a transversal
argument, and orders $2$ and $4$ through $17$ are covered by exact support,
circulation, character, and matching-core certificates that bound the
dimensions of all possible supporting faces.  The structural classification
also excludes surviving primitive orders $d\geq18$.  Thus the certificate
proves the pole bounds needed here while avoiding a full face-lattice census.

\begin{theorem}[Order-eight common denominator]
\label{thm:q598-eight}
The polynomial $Q_{598}$ is a common denominator of $F_8$, and
$\deg Q_{598}=598$.
\end{theorem}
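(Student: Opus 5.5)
The proof has two parts. First, the pole bounds say that $F_8$ times $Q_{598}$ is a polynomial. Second, we compute the degree.

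\emph{Rationality and pole locations.} By Ehrhart theory, $F_8$ is a rational function whose poles are all roots of unity. Its denominator therefore factors as $\prod_d \Phi_d(q)^{p_d}$, where $p_d$ is the pole order of $F_8$ at any primitive $d$th root of unity. This order is well defined because $F_8$ has rational coefficients: the Galois conjugates of a primitive $d$th root all have the same pole order. The only $d$ that can occur are divisors of vertex denominators, so the product is finite.

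\emph{Pole bounds.} Proposition~\ref{prop:face-index-eight} bounds each $p_d$ by the face-dimension quantity $M_d$. The face-index certificate~\eqref{eq:q598-eight} sharpens this to $p_d\le E_d$ for $1\le d\le 17$ and shows $p_d=0$ for every $d\ge 18$. For $d=1$ the bound $E_1=48$ is simply $\dim\MS_8+1$, the general bound $M_1=1+47$. Hence the reduced denominator $\prod_d\Phi_d^{p_d}$ divides $\prod_{d\le17}\Phi_d^{E_d}=Q_{598}$, since the $\Phi_d$ are pairwise coprime irreducibles in $\mathbb{Q}[q]$. It follows that $Q_{598}F_8$ is a polynomial in $\mathbb{Q}[q]$. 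It is in fact in $\mathbb{Z}[q]$ by Gauss's lemma: $F_8$ has integer coefficients and constant term $1$, and $Q_{598}$ is monic with constant term $\pm1$.

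\emph{Degree.} Since $\deg\Phi_d=\varphi(d)$, the degree is $\sum_{d=1}^{17}E_d\varphi(d)$. The contributions are
\[
48+30+80+46+52+24+54+32+42+24+50+16+48+12+16+8+16,
\]
which sum to $598$.

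\emph{Main obstacle.} All the real content lies in the certificate entries $p_d\le E_d$, together with the exclusion of poles at orders $d\ge 18$. These rest on the support, circulation, character and matching-core arguments, which I take as given from the certificate stated just before the theorem. Given that certificate, the theorem reduces to the divisibility argument and the degree arithmetic above.
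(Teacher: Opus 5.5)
Your proposal is correct and matches the paper's proof. Both take the certified bounds $p_d\le E_d$ for $1\le d\le 17$ and the exclusion of poles at primitive orders $d\ge18$ as the external input, conclude that $Q_{598}$ is a common denominator, and verify $\sum_{d=1}^{17}E_d\varphi(d)=598$ (your term-by-term sum is right); you spell out the divisibility step through the pairwise coprimality of the $\Phi_d$, while the paper instead adds a computational cross-check that $Q_{598}$ is recovered coefficientwise and contains each displayed cyclotomic factor.
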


\begin{proof}[Computer-assisted proof]
The certified face-index calculation applies
Proposition~\ref{prop:face-index-eight} at every relevant order, proves
$p_d\leq E_d$ for $1\leq d\leq17$, and excludes surviving poles at primitive
orders $d\geq18$.  Two independent exact polynomial constructions recover
$Q_{598}$ coefficientwise and verify all displayed cyclotomic factors by monic
division.  Finally,
\[
 \sum_{d=1}^{17}E_d\varphi(d)=598.
\]
The complete certificate and verification details are supplied in the
companion report.  No residue-based denominator argument is used.
\end{proof}

For order eight the counting bound specializes to
\begin{equation}
 \IMS_8(m)\leq C_8(m):=\binom{m+7}{7}^{6}\binom{m+5}{5}.
 \label{eq:count-bound-eight}
\end{equation}

\begin{lemma}[Six-prime finite-prefix certificate]
\label{lem:f8-six-prime-prefix}
Write $F_8=\sum_{m\geq0}f_mq^m$ and
$G_8=N_8/D_8=\sum_{m\geq0}g_mq^m$.  Let $Q_{598}$ be the certified common
denominator, so $Q_{598}(0)=D_8(0)=1$ and $D_8\mid Q_{598}$.  The first six
production primes are
\[
\begin{aligned}
p_1&=2261626278912001,&p_2&=2693391295795201,&p_3&=2775632251392001,\\
p_4&=2816752729190401,&p_5&=3001794879283201,&p_6&=3084035834880001.
\end{aligned}
\]
For every $i\leq6$, the complete modular certificate identifies the
degree-$0$ through degree-$295$ prefix of $Q_{598}F_8\bmod p_i$ and verifies
that it agrees with the corresponding prefix of $Q_{598}G_8\bmod p_i$.
An exact expansion of the displayed candidate also verifies
\[
 0\leq g_m\leq C_8(m)\qquad(0\leq m\leq295).
\]
Then $F_8=G_8$.
\end{lemma}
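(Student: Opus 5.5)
The plan is to reduce the statement to a coefficientwise equality $f_m=g_m$ for $0\leq m\leq295$, deduce that equality from the six congruences and the counting bound, and then lift it to $F_8=G_8$ by reciprocity. This mirrors the proofs of Corollary~\ref{cor:finite-identity} and Theorem~\ref{thm:deterministic-identity}, with $Q_{598}$ replacing $U_{15}$.

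\textbf{Step 1: reduction to a prefix identity.} By Theorem~\ref{thm:q598-eight}, $\widetilde F=Q_{598}F_8$ is a polynomial. Since $D_8\mid Q_{598}$, so is $\widetilde G=Q_{598}G_8$. I would then compute the reciprocity behaviour of $Q_{598}$. Each $\Phi_d$ with $d\geq2$ is palindromic, while $\Phi_1=q-1$ satisfies $\Phi_1(q^{-1})=-q^{-1}\Phi_1(q)$. Since $E_1=48$ is even, this gives $Q_{598}(q^{-1})=q^{-598}Q_{598}(q)$. Combined with \eqref{eq:reciprocity-eight}, it follows that $\widetilde F(q^{-1})=q^{-590}\widetilde F(q)$, so $\widetilde F$ is palindromic of degree $590$.

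The same holds for $\widetilde G$, read off from the candidate. The numerator $N_8$ is palindromic of degree $540$, and $D_8$ has degree $548$ with an even power of $(1-q)$. Hence $G_8(q^{-1})=q^{8}G_8(q)$. Two palindromic polynomials of degree $590$ that agree in degrees $0,\ldots,295$ agree everywhere, so $F_8=G_8$. It therefore suffices to prove $f_m=g_m$ for $0\leq m\leq295$.

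\textbf{Step 2: congruences for the series coefficients.} Because $Q_{598}(0)=1$, the coefficients of $F_8$ in degrees $0,\ldots,295$ are recovered from those of $\widetilde F$ by a unitriangular linear map with integer entries, namely multiplication by the power series $Q_{598}^{-1}$. The same map recovers the coefficients of $G_8$ from those of $\widetilde G$. This map commutes with reduction modulo $p_i$. Hence the certified prefix agreement $\widetilde F\equiv\widetilde G$ gives $f_m\equiv g_m\pmod{p_i}$ for $m\leq295$ and each $i\leq6$. Writing $P=p_1\cdots p_6$, we get $P\mid f_m-g_m$.

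\textbf{Step 3: integer lifting.} Theorem~\ref{thm:count-bound} with $n=8$ gives $0\leq f_m\leq C_8(m)$, and the hypothesis gives $0\leq g_m\leq C_8(m)$. Hence $|f_m-g_m|\leq C_8(m)\leq C_8(295)$, using monotonicity of $C_8$ in $m$. The lift then needs $C_8(295)<P$; the stronger inequality $2C_8(295)<P$, as in \eqref{eq:ifma-crt-bound}, would also allow centered reconstruction. Once this holds, $f_m=g_m$ on the whole prefix, and Step 1 finishes the proof.

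I expect this final numerical inequality to be the main obstacle, because it is tight. A rough estimate gives $\binom{302}{7}\approx2^{45.4}$ and $\binom{300}{5}\approx2^{34.2}$, so $C_8(295)\approx2^{307}$. Each prime lies between roughly $2^{51.0}$ and $2^{51.5}$, so $P\approx2^{307.6}$. Floating estimates are not enough to settle this. I would verify $C_8(295)<P$, and ideally $2C_8(295)<P$, by exact big-integer arithmetic and record the ratio. If only $C_8(295)<P$ holds, nonnegativity of both $f_m$ and $g_m$ still suffices, since then $|f_m-g_m|<P$ forces $f_m=g_m$.
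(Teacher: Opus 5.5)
Your proposal is correct and follows the paper's proof essentially step for step. $Q_{598}(0)=1$ transfers the prefix congruences to $f_m\equiv g_m\pmod{p_i}$, the counting bound and the candidate bound confine both $f_m$ and $g_m$ to $[0,C_8(295)]$, and palindromy of $Q_{598}F_8$ and $Q_{598}G_8$ in degree $590$ lets the $296$ prefix coefficients decide the identity. The paper settles the inequality you flag exactly as you suggest, by the exact-integer check $p_1\cdots p_6>2\binom{302}{7}^{6}\binom{300}{5}$; your floating estimate of $C_8(295)$ is slightly high (it is about $2^{305.8}$), so the six-prime product exceeds $2C_8(295)$ by a factor of roughly $1.9$.
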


\begin{proof}
Since $Q_{598}(0)=1$, it is a unit in
$\mathbb F_{p_i}[[q]]/(q^{296})$.  The certified congruences therefore give
$f_m\equiv g_m\pmod {p_i}$ for $m\leq295$ and $i\leq6$.  With
$M_6=\prod_{i=1}^{6}p_i$, exact integer arithmetic gives
\[
 M_6>2\binom{302}{7}^{6}\binom{300}{5}=2C_8(295).
\]
The counting bound and the verified candidate bound put both $f_m$ and $g_m$
in $[0,C_8(m)]$.  Since $C_8(m)$ is increasing,
\[
 M_6\mid(f_m-g_m),\qquad
 |f_m-g_m|\leq C_8(m)\leq C_8(295)<\frac{M_6}{2},
\]
and hence $f_m=g_m$ through degree $295$.  Thus
$Q_{598}F_8$ and $Q_{598}G_8$ agree through degree $295$.  Reciprocity makes
both polynomials palindromic of degree $598-8=590$, so these $296$
coefficients determine them completely.  Therefore $F_8=G_8$.
\end{proof}

Set $A=Q_{598}F_8=Q_{598}G_8$.  The archived direct centered-CRT route still
uses all eight fields to reconstruct $A$ under its $368$-bit absolute
convolution bound.  The lemma uses neither that bound nor its eight-prime
integer-lift inference: its first six fields prove the identity, while the
remaining two fields are independent modular checks for this proof.

Exact polynomial arithmetic then gives
\begin{equation}
 \gcd(A,Q_{598})=\Phi_3^{19}\Phi_4^6.
 \label{eq:gcd-eight}
\end{equation}
After division, the denominator is precisely
\eqref{eq:intro-denominator-eight}; division of $N_8$ by each of
$\Phi_1,\ldots,\Phi_{17}$ leaves a nonzero remainder.  This proves both
coprimality and the exact quasiperiod in~\eqref{eq:intro-period-eight}.
Consequently the actual pole orders are the exponents in $D_8$ (for example
$p_3=21$ and $p_4=17$), not the larger common-denominator bounds
$E_3=40$ and $E_4=23$.

\begin{proof}[Completion of the proof of Theorem~\ref{thm:intro-main-eight}]
The common-denominator theorem, the complete coefficients through degree
$295$, and palindromy establish $F_8=A/Q_{598}$ as an identity in
$\mathbb Z(q)$.  Equation~\eqref{eq:gcd-eight} gives the displayed reduced
pair.  A direct coefficientwise check of all $541$ coefficients of $N_8$ verifies positivity,
palindromy, and the $270$ strict inequalities up to the central coefficient;
reflection verifies the decreasing half.

It remains only to justify the root statement without numerical root finding.
The first six coefficients and the central coefficient are
\[
\begin{aligned}
b_0&=1,&b_1&=5581,&b_2&=40000118,\\
b_3&=74190533830,&b_4&=48754013922960,
&b_5&=14611032028867555,
\end{aligned}
\]
and
\[
 b_{270}=\text{\scriptsize\seqsplit{1047542895748979492023699241215053137738470989036892054787290835946709172733839734221390}}.
\]
If a degree-$540$ polynomial with positive coefficients were real-rooted,
Newton's normalized inequalities would imply
\[
 b_1^2(540-1)\geq b_0b_2(2)(540).
\]
Instead, exact arithmetic gives
\begin{equation}
 16{,}788{,}535{,}379 < 43{,}200{,}127{,}440.
 \label{eq:newton-eight}
\end{equation}
Thus $N_8$ is not real-rooted.
\end{proof}

\begin{remark}[Logarithmic coefficient inequalities]
\label{rem:log-properties}
Exact integer arithmetic shows that $(b_j)_{j=0}^{540}$ is strictly
log-concave at every interior index $2\leq j\leq538$, but the inequality is
reversed at $j=1$ and $j=539$.  Thus it is neither globally log-concave nor
globally log-convex; there are no equality cases.  The order-seven numerator
has the identical pattern, with the two failures at $j=1$ and $j=365$ and
strict log-concavity for $2\leq j\leq364$.  These assertions are verified by
the exact-integer program and certificate in the supplementary data.  We do
not include a numerical root plot: no certified geometric pattern has been
identified, whereas the exact Newton-inequality violation above already proves
the root statement without numerical conditioning issues for a degree-$540$
polynomial.
\end{remark}

\subsection{Initial values and exact volume}
The first counting values, through the same index displayed for order seven,
are listed in Table~\ref{tab:first-series-eight}.
\begin{table}[ht]
\centering
\caption{The first values of the order-eight counting function.}
\label{tab:first-series-eight}
\begin{tabular}{r r}
\toprule
$m$ & $\IMS_8(m)$\\
\midrule
0&1\\
1&5{,}568\\
2&39{,}927{,}640\\
3&73{,}670{,}950{,}624\\
4&47{,}792{,}535{,}614{,}002\\
5&13{,}982{,}784{,}260{,}605{,}760\\
6&2{,}196{,}239{,}499{,}300{,}494{,}376\\
7&209{,}544{,}809{,}583{,}237{,}218{,}400\\
8&13{,}290{,}728{,}645{,}819{,}378{,}308{,}302\\
9&599{,}646{,}627{,}680{,}350{,}946{,}249{,}984\\
10&20{,}273{,}944{,}929{,}747{,}453{,}444{,}098{,}936\\
11&535{,}125{,}236{,}417{,}998{,}169{,}715{,}446{,}240\\
12&11{,}393{,}951{,}792{,}714{,}698{,}482{,}883{,}876{,}290\\
\bottomrule
\end{tabular}
\end{table}
The exact volume check is
\begin{equation}
 \lim_{q\to1}(1-q)^{48}F_8(q)=47!\,\vol(\MS_8),
 \label{eq:volume-identity-eight}
\end{equation}
where $\vol(\MS_8)=V_{8,1}/V_{8,2}$ and
\begin{center}
\footnotesize
\begin{minipage}{0.94\linewidth}
\raggedright
$V_{8,1}={}$\seqsplit{999891155416839311062886321449724013824895137684932265252679038291556394910242783},\\[2pt]
$V_{8,2}={}$\seqsplit{28666002940690405582989811143492438714509659424422106327859640059458892488113419347558400000000000000000000000}.
\end{minipage}
\end{center}
Two different admissible polarizations independently produce this same exact
volume.  Both orders seven and eight use the same relative lattice
normalization, and substitution in the reduced rational function independently
reproduces~\eqref{eq:volume-identity-eight}.

\subsection{Independent verification and wall-clock measurements}

The order-eight rational function was checked by exhaustive coverage of all
signed contributions, two independent CRT reconstructions followed by
coefficientwise comparison, independent initial-value and volume computations,
and exact polynomial division.  These finite checks complement, but do not
replace, the face-index and finite-sum certificates.  Detailed reproducibility
records are provided in the accompanying computational report and supplementary
materials.

The observed main coefficient computation took between
$217.2941667$ and $219.0044444$ hours.  From its launch to the accepted final
data assembly took $277.0952778$ hours, a wider interval that includes later
reconstruction, correction, and verification work but excludes the pre-launch
geometry and decomposition.  A complete dual-index census was timed separately
at $4.5864$ hours and is not added to either figure.  No matched fresh
decomposition-to-publication timer was retained for both orders, so these data
do not define an end-to-end order-eight/order-seven speedup.  The exact
endpoints, the separate $209.4635$-hour main-kernel model, and the order-seven
timing scopes are recorded in the technique report.

\subsection{What changed from order seven to order eight}

The scale and the savings must be compared on named axes.  For order eight the
reported term count is taken after exact combination of equal nonzero signed
SimpCone contributions.  It is therefore not the terminal-cone count before
combination reported for order seven; a comparable pre-combination order-eight count was not
retained.  Summing the exact dual-index histogram gives the character-product
count.

\begin{table}[ht]
\centering
\caption{Exact comparison of the order-seven and order-eight calculations.}
\label{tab:order-seven-eight-comparison}
\footnotesize
\begin{tabular}{>{\raggedright\arraybackslash}p{0.39\linewidth}
                >{\raggedright\arraybackslash}p{0.24\linewidth}
                >{\raggedright\arraybackslash}p{0.24\linewidth}}
\toprule
quantity & order seven & order eight\\
\midrule
polytope dimension & 34 & 47\\
global pole order at $q=1$ & 35 & 48\\
complete vertex census & 5,920,184 & not performed\\
SimpCone terminal cones before combination & 649,848,846 & not retained\\
nonzero combined SimpCone contributions & 166,156,916 & 38,048,915,922\\
quotient-character products $\sum\delta$ & 257,827,737 & 57,040,440,830\\
$\sum\delta^2$ & 552,949,713 & 114,934,684,208\\
mean dual index & 1.551712340 & 1.499134455\\
mean squared dual index & 3.327876602 & 3.020708512\\
fraction with $\delta\leq3$ & 94.972226\% & 95.868526\%\\
maximum dual index & 23 & 26\\
mean local Laurent pole order & 8.3313 & not retained\\
common-denominator method & complete vertex envelope & face-index pole bound\\
certificate denominator degree & 2520 & 598\\
decisive degrees & $0,\ldots,1256$ & $0,\ldots,295$\\
number of decisive coefficients & 1257 & 296\\
reduced degrees $(\deg N_n,\deg D_n)$ & $(366,373)$ & $(540,548)$\\
\bottomrule
\end{tabular}
\end{table}

Thus the nonzero combined-contribution family grows by
$38{,}048{,}915{,}922/166{,}156{,}916=228.993874\ldots$, and the
character-product family grows by $221.234695\ldots$.  The mean dual index
slightly decreases rather than increasing.  Here ``pole order'' means the
global pole at $q=1$.  The order-seven data also record a mean
local Laurent pole order $8.3313$; no directly comparable calculation-wide
order-eight statistic is substituted for it.

The complete order-eight dual-index histogram is supplied in the online data
and computational report.  Its exact count, first moment and second moment
give the corresponding totals in
Table~\ref{tab:order-seven-eight-comparison}.

The main mathematical reduction is the sharper common denominator.  The
number of decisive coefficients falls from $1257$ to $296$, a factor
\[
 \frac{1257}{296}=4.2466216\ldots.
\]
The order-eight evaluation further replaces a uniform coefficient range for every
contribution by root-local principal parts whose lengths are controlled by the
actual local pole orders.  On a matched sample of $128$ contributions at $T=256$, elapsed
time fell from $17.714089$ to $3.288856$ seconds, a factor $5.386095$; the
measurement details are in the computational report.  This local factor
and the certificate-width factor act on overlapping work and are not
multiplied into an unsupported overall speedup factor.

\subsection{Reduced denominators in orders four through eight}

Write
\[
 D_n(0)=1,\qquad
 D_n(q)=(1-q)^{e_{n,1}}\prod_{k\geq2}\Phi_k(q)^{e_{n,k}}.
\]
This fixes the unit normalization; with the standard convention
$\Phi_1(q)=q-1$, replacing $(1-q)^{e_{n,1}}$ by
$\Phi_1^{e_{n,1}}$ would change the sign in odd exponent.
The exact reduced denominators give the following comparison.

\begin{longtable}{c p{0.65\textwidth} rr}
\caption{Reduced cyclotomic denominators, $4\leq n\leq8$.}
\label{tab:reduced-denominators}\\
\toprule
$n$ & $D_n(q)$ & $\deg N_n$ & $\deg D_n$\\
\midrule
\endfirsthead
\multicolumn{4}{c}{\tablename\ \thetable\ (continued)}\\
\toprule
$n$ & $D_n(q)$ & $\deg N_n$ & $\deg D_n$\\
\midrule
\endhead
4 & $(1-q)^8\Phi_2^4$ & 8 & 12\\
5 & $(1-q)^{15}\Phi_2^{10}\Phi_3^7\Phi_4^4\Phi_5^4
      \Phi_7^2\Phi_9$ & 76 & 81\\
6 & $(1-q)^{24}\Phi_2^{14}\Phi_3^{12}\Phi_4^7\Phi_5^5
      \Phi_6^6\Phi_7^3\Phi_8^2\Phi_9\Phi_{10}$ & 138 & 144\\
7 & $(1-q)^{35}\Phi_2^{22}\Phi_3^{19}\Phi_4^{12}\Phi_5^{10}
      \Phi_6^8\Phi_7^7\Phi_8^6\Phi_9^5\Phi_{10}^4\Phi_{11}^4
      \Phi_{12}^2\Phi_{13}^2\Phi_{14}\Phi_{15}$ & 366 & 373\\
8 & $(1-q)^{48}\Phi_2^{30}\Phi_3^{21}\Phi_4^{17}\Phi_5^{13}
      \Phi_6^{12}\Phi_7^9\Phi_8^8\Phi_9^7\Phi_{10}^6\Phi_{11}^5
      \Phi_{12}^4\Phi_{13}^4\Phi_{14}^2\Phi_{15}^2\Phi_{16}\Phi_{17}$
    & 540 & 548\\
\bottomrule
\end{longtable}

The lower orders fail the consecutive-support pattern by omissions, not by
unexpected higher factors.  Order four has only orders $1,2$; order five has
maximum order $9$ but omits $\Phi_6$ and $\Phi_8$; order six contains every
order from $1$ through $10$ but none from $11$ through $13$.  Orders seven and
eight are the first two cases in this table with every factor through
$2n+1$.

\subsection{Positive and unimodal numerators}

Direct coefficientwise checks of the five available reduced forms give more than a visual
pattern.

\begin{proposition}
\label{prop:numerator-shapes}
For each $n=4,5,6,7,8$, the reduced numerator $N_n$ has positive integer
coefficients, is palindromic and strictly unimodal, and is not real-rooted.
Here every rational form is normalized by $D_n(0)=1$.
\end{proposition}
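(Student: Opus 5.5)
The proof is a finite, exact check on the five reduced pairs $(N_n,D_n)$, $4\le n\le 8$.

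\emph{Sources of the reduced pairs.} For $n=7$ and $n=8$ these are Theorems~\ref{thm:deterministic-identity} and~\ref{thm:intro-main-eight}. For $n=4,5,6$ they are the reduced forms listed in Table~\ref{tab:reduced-denominators}. Their correctness would be certified exactly as for order seven:
\begin{enumerate}
\item the vertex (or cone-wise) common denominator, for example $D_5^{\rm cone}$ in~\eqref{eq:n5-safe-cone-lcm};
\item codegree-$n$ reciprocity from Proposition~\ref{prop:codegree};
\item the counting bound of Theorem~\ref{thm:count-bound}, which lifts modular prefix agreement to integer equality.
\end{enumerate}
Coprimality then follows by exact division of $N_n$ by each cyclotomic factor of $D_n$. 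With every $D_n$ normalized by $D_n(0)=1$, the numerator $N_n=D_nF_n$ is uniquely determined, so its coefficient properties are well defined.

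\emph{Coefficient properties.} For each $n$ I would expand $N_n$ over $\mathbb Z$ and check three things coefficientwise:
\begin{enumerate}
\item positivity of every coefficient;
\item palindromy $c_j=c_{r-j}$ with $r=\deg N_n$, so $r=8,76,138,366,540$;
\item the strict inequalities $c_0<c_1<\cdots<c_{\lfloor r/2\rfloor}$.
\end{enumerate}
Palindromy can also be explained structurally, and I would use this as a cross-check. By~\eqref{eq:reciprocity-series}, $F_n(q^{-1})=(-1)^{d_n+1}q^nF_n(q)$. A cyclotomic product with $D_n(0)=1$ satisfies $D_n(q^{-1})=\pm q^{-\deg D_n}D_n(q)$. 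Together these force $N_n(q^{-1})=\pm q^{-(\deg D_n-n)}N_n(q)$. Since $N_n(0)=1>0$, the sign must be $+$, giving degree $\deg D_n-n$; this matches the table, e.g.\ $373-7=366$. Once palindromy is known, strict unimodality on the first half yields the decreasing second half by reflection.

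\emph{Non-real-rootedness.} I would use the Newton-inequality obstruction, as was done for $n=8$. A real-rooted polynomial of degree $r$ with positive coefficients satisfies
\[
b_1^2(r-1)\ge 2r\,b_0b_2 .
\]
So for each $n$ it suffices to compute $a_1^2(r-1)$ and $2r\,a_2$, since $a_0=1$, and exhibit a strict reversal.
\begin{enumerate}
\item For $n=7$: $671^2\cdot365=164{,}338{,}965$, while $2\cdot366\cdot667331\approx4.9\times10^8$, so the inequality fails.
\item For $n=5,6$: I expect the same comparison at $j=1$ to fail. This fits Remark~\ref{rem:log-properties}, which records that log-concavity is reversed at $j=1$.
\item For $n=4$: the numerator has degree $8$ and small coefficients. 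If the $j=1$ Newton test happens to hold, I would fall back on a Sturm sequence over $\mathbb Q$ to show that $N_4$ has fewer than $8$ real roots. Alternatively, I would test the Newton inequality at another index.
\end{enumerate}

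\emph{Main obstacle.} The difficulty is not the coefficient checks but the reliance on correct reduced forms for $n=4,5,6$. These need the same finite certificate (common denominator, reciprocity, and bounded-coefficient lifting) as orders seven and eight, or must be matched against the published order-six series~\cite{Xin2015}. Among the checks themselves, the only step that is not purely mechanical is $n=4$ if the first Newton inequality is not violated; the Sturm fallback handles that case exactly.
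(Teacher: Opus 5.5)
Your plan is correct and is essentially the paper's proof: exact coefficientwise checks of positivity, palindromy and strict increase to the midpoint, plus a violation of Newton's inequality $a_1^2(r-1)\ge 2r\,a_0a_2$ at $j=1$ for every $n$. The paper simply takes orders four to six from the previously established reduced series rather than recertifying them. A few minor corrections:
\begin{itemize}
\item The $j=1$ test already fails for $n=4$ ($4^2\cdot 7=112<288=2\cdot 8\cdot 18$), so the Sturm fallback is never needed.
\item The correct value is $671^2\cdot 365=164{,}337{,}965$; the slip does not affect the conclusion.
\item In your palindromy cross-check, $N_n(0)=1$ alone does not exclude an antipalindromic $N_n$. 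The sign is $+$ because $e_{n,1}=d_n+1$, so the factor $(-1)^{d_n+1}$ from \eqref{eq:reciprocity-series} cancels the factor $(-1)^{e_{n,1}}$ from $D_n(q^{-1})$.
\end{itemize}
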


\begin{proof}
The exact reduced numerators are checked coefficient by coefficient for
positivity, reflection symmetry and strict increase through the midpoint.
For non-real-rootedness, Newton's normalized inequality already fails at
$j=1$ in every case.  The exact left and right sides are
\[
\begin{array}{c|r|r}
n&a_1^2(\deg N_n-1)&2(\deg N_n)a_0a_2\\\hline
4&112&288\\
5&58{,}800&97{,}128\\
6&1{,}342{,}737&4{,}155{,}732\\
7&164{,}337{,}965&488{,}486{,}292\\
8&16{,}788{,}535{,}379&43{,}200{,}127{,}440.
\end{array}
\]
Each left side is strictly smaller than the corresponding right side.
\end{proof}

For orders seven and eight, the finite identity certificates establish the
reduced Ehrhart-series identities in this paper.  For orders four, five and six
we use the previously established reduced Ehrhart series.  Exact polynomial
arithmetic verifies the displayed factorizations, coprimality and coefficient
properties.

This finite evidence suggests the following structural question; it does not
by itself prove the statement beyond the five computed orders.

\begin{conjecture}[Positive unimodal numerator conjecture]
\label{conj:numerator-shape}
For every $n\geq4$, the numerator of the reduced Ehrhart series of weak
$n\times n$ magic squares has strictly positive coefficients and is strictly
unimodal.
\end{conjecture}

\section{Concluding remarks}
\label{sec:conclusion}

The order-seven series is obtained without decomposing each
SimpCone term into unimodular cones.  The quotient-character average is small
for most cones, zero grading steps are handled by the established generalized
Todd calculation, and the remaining factors are inserted by
\eqref{eq:prefix-recurrence}.  Combined with reciprocal reconstruction and the
finite-prefix certificate, this gives the reduced rational function stated in
Theorem~\ref{thm:main-result}.

The completed order-eight computation shows that the same algebraic framework
continues to work. There are two major changes compared to order-seven computation: 
i) the nonzero signed family by SimpCone grows by a factor of almost
$229$; the truncation order $T$ used here decreased from $1257$ to $296$, where the optimal
$T$ increased from $184 $ to $271$. 

We are not planning to compute the order 9 case yet. A optimistic  estimate of the running time
would be $229\times \frac{271}{184}\times 218 =73526$ hours or $3063$ days. Here, $218$ hour
is the running time for order $8$, and the two factors are taken from order $8$ compared to order $7$. 

The method developed in this paper is effective more generally when a signed simplicial decomposition
has a manageable number of surviving cones, small quotient groups, controlled
zero-step pole orders and nonzero-step profiles, and a feasible certified
common denominator. Gorenstein reciprocity reduces
the required truncation when available but is not required by the evaluator.

More concretely, let $G=(V,E)$ be a finite loopless graph and let $A_G$ be
its unsigned vertex--edge incidence matrix. A weak magic labelling of
index~$m$ (in the sense of Stanley) is a vector
$x\in\mathbb{Z}_{\geq 0}^{E}$ satisfying $A_G x = m\mathbf{1}_V$. The
homogenized cone of nonnegative solutions to
$(A_G \mid -\mathbf{1})(x,m)^{\mathsf T}=0$, graded by~$m$, is precisely
of the form required by the SimpCone--LRQC construction; hence our method
applies directly.

Moreover, the denominator needed for a finite-prefix test is explicitly
controlled.  The vertices of the associated polytope $P_G$ are half-integral,
and $P_G$ is integral when $G$ is bipartite
\cite[Propositions~2.7 and~2.9]{Stanley1973}.  Hence, if $d=\dim P_G$,
Proposition~\ref{prop:vertex-common-denominator} with $h=2$ shows that the
reduced Ehrhart denominator divides
\[
 (1-q)^{d+1}(1+q)^{d+1}.
\]
When $G$ is bipartite, the same proposition with $h=1$ sharpens this bound to
$(1-q)^{d+1}$.  If $G$ is $r$-regular, the all ones vector plays the same role
as in Proposition~\ref{prop:codegree}, giving the same reciprocity-based
reduction of the required truncation degree.  The actual computational cost is
governed by the number of simplicial cones together with their dual indices.

\section*{Data and code availability}

The companion online data contain the exact reduced rational
functions for orders $4$ through $8$, complete numerator and denominator
coefficient files for orders seven and eight, counting-sequence b-files, the
exponent table underlying Table~\ref{tab:reduced-denominators}, and a
verification program.  In particular, the complete $541$-coefficient
order-eight numerator and the optional $271$-row palindromic half-table are
deliberately supplied online rather than printed in this article.  The files
\nolinkurl{F8_reduced_form.json} and
\nolinkurl{F8_reduced_numerator_bfile.txt} provide the complete reduced form
and numerator, respectively.

For order seven, the core source, executable, encoded cone list,
coefficient-prefix, and certificate objects remain identified by the hashes in
Appendix~\ref{app:certificates}; the original order-seven arXiv source is
included unchanged in the supplementary materials.  For order eight, the accompanying computational report and
supplementary materials give the detailed timing, validation and
reproducibility records.  Those records are not reproduced in the mathematical
article.

The revision supplement additionally contains
\nolinkurl{verify_f8_six_prime_prefix.py} and its machine-readable certificate,
which verify the six-prime finite-prefix lemma, as well as
\nolinkurl{verify_numerator_log_properties_r4.py} and its exact-integer output,
which verify Remark~\ref{rem:log-properties}.  The accompanying technique
report separates the mathematical proof obligations, measured timing
boundaries, and the conditional order-nine model.

The data and source code accompanying this article are publicly available in
the IMS7--IMS8 Reproducible Bundle~\cite{QiuXinZhangIMS78Repository}.

\appendix

\section{The independent coefficients of the order-seven numerator}
\label{app:numerator}

The numerator has degree $366$ and satisfies $a_j=a_{366-j}$, so the
following table gives the complete numerator by listing
$a_0,\ldots,a_{183}$.  The table is generated directly from the canonical
data file, and the generation script independently checks the degree, the
normalization, the palindromy, and the nonnegativity before writing the
table.

{
\clearpage
\begingroup
\setlength{\LTleft}{\fill}
\setlength{\LTright}{\fill}
\setlength{\tabcolsep}{2pt}
\fontsize{6.4}{7.0}\selectfont
\renewcommand{\arraystretch}{1.12}
\begin{longtable}{r@{\hspace{0.45em}}l@{\hspace{1.8em}}r@{\hspace{0.45em}}l}
\caption{The independent half of the exact numerator coefficient vector.}
\label{tab:numerator-half}\\
\toprule
Index & Coefficient & Index & Coefficient\\
\midrule
\endfirsthead
\toprule
Index & Coefficient & Index & Coefficient\\
\midrule
\endhead
\bottomrule
\endlastfoot
0 & \texttt{1} & 70 & \texttt{19848244167444944852012226379908966013494806114893} \\
1 & \texttt{671} & 71 & \texttt{34137449581264329634953112779838813139519423526727} \\
2 & \texttt{667331} & 72 & \texttt{58191436015241381566993950090454314870106048591685} \\
3 & \texttt{222060620} & 73 & \texttt{98329769411620916788453562842747220963079923553038} \\
4 & \texttt{33043658791} & 74 & \texttt{164734629270231183820588686805960399359545996085016} \\
5 & \texttt{2699129214788} & 75 & \texttt{273672985156819565134652083226993389885521806961427} \\
6 & \texttt{139199065987956} & 76 & \texttt{450917049487670439905381478053081938946349150191770} \\
7 & \texttt{4990121221600635} & 77 & \texttt{736965425041225940586522247727244760958137884674266} \\
8 & \texttt{133140925200938597} & 78 & \texttt{1194947755526349252364837349724091686798255908176859} \\
9 & \texttt{2779643186494863799} & 79 & \texttt{1922495750866040583290675127201220966085098896987962} \\
10 & \texttt{47158950275831787915} & 80 & \texttt{3069429392344449537985430672854733193473111703803052} \\
11 & \texttt{669498302589005809260} & 81 & \texttt{4863900907345214817949970248261960612795001944911539} \\
12 & \texttt{8139585344659357377677} & 82 & \texttt{7650743098461039714622678152187636841263535800534412} \\
13 & \texttt{86343188458930701945640} & 83 & \texttt{11947291196547667043627102885698998270653567291580235} \\
14 & \texttt{811456312772715955382678} & 84 & \texttt{18524029808369819386868122538969805132914422077447587} \\
15 & \texttt{6842682078012612353335627} & 85 & \texttt{28520240955486871429403314455214617480210459253373017} \\
16 & \texttt{52329459423030464904720499} & 86 & \texttt{43608628372390641954038784823961691340198782645628934} \\
17 & \texttt{366237639448184294844381736} & 87 & \texttt{66227961336886821166871516846665645584000527942127439} \\
18 & \texttt{2364041243908376279400828136} & 88 & \texttt{99909486331158689611379992214525390471155623355757718} \\
19 & \texttt{14169207697850399508341168241} & 89 & \texttt{149731652101837371450397274322470565640090371303275639} \\
20 & \texttt{79320043606576667946312046666} & 90 & \texttt{222949140288991405920513330835493060752377385689123025} \\
21 & \texttt{416873452289571056027470493628} & 91 & \texttt{329856963376921761431717021462340952802130673839983428} \\
22 & \texttt{2066262131738902794009413762269} & 92 & \texttt{484969288914576517847972165356809086848176658413111687} \\
23 & \texttt{9697989259890323797217344266935} & 93 & \texttt{708616622372701400879610246020156836039906496857740436} \\
24 & \texttt{43257385393247229925541445362954} & 94 & \texttt{1029095133464939564602175317167764957511207199129655481} \\
25 & \texttt{183961527284601714398949342593359} & 95 & \texttt{1485539505005392545307252362961447943564585793370896473} \\
26 & \texttt{748080932428164886054681982962293} & 96 & \texttt{2131737141205744254112272834283526786409248162660964165} \\
27 & \texttt{2916559130263114053217014320407005} & 97 & \texttt{3041158464639838238190043210395338223659017611970305444} \\
28 & \texttt{10927828461277779110426104971408610} & 98 & \texttt{4313547056085339286890391021822033193798176322658842652} \\
29 & \texttt{39435333647107460298965508571544751} & 99 & \texttt{6083496337614323388417382485443666933948471766099171561} \\
30 & \texttt{137338349217034695956808117786815456} & 100 & \texttt{8531538190274081123594992472839144551730612518642080297} \\
31 & \texttt{462430670944428106160976016482887251} & 101 & \texttt{11898385115209835979728860205167779213898096521578910226} \\
32 & \texttt{1507924486385289624147469086874562325} & 102 & \texttt{16503102929018191369665949476043176055661899155450038059} \\
33 & \texttt{4769392660369402208549888708279707795} & 103 & \texttt{22766146894801130571244238810315857952462616822671042572} \\
34 & \texttt{14652717311228307013590530319995869735} & 104 & \texttt{31238371561000818715816828819167668559289689753405868579} \\
35 & \texttt{43784448329693156587304137588359148547} & 105 & \texttt{42637323718304494534398596304876434121479012558928838534} \\
36 & \texttt{127409469990028438870838545463645525916} & 106 & \texttt{57892348254195025152764330587720280125870080772283379734} \\
37 & \texttt{361459486843137762115121479791602663588} & 107 & \texttt{78200276655892914630209782430083902949658039849099312998} \\
38 & \texttt{1000819154323357034695526910535363068119} & 108 & \texttt{105093724492214628671333400235058223458756618598592858807} \\
39 & \texttt{2707202379684256383667080364639324771043} & 109 & \texttt{140524292749261897216152261880547831775750303002916600790} \\
40 & \texttt{7160739847028155669612977366256379035537} & 110 & \texttt{186963241818141120267859939004683526103938029056514462293} \\
41 & \texttt{18537209222233872433900156474325393876029} & 111 & \texttt{247522477426629001800541132927146585918242246735666195226} \\
42 & \texttt{47003889572467277470974044655561584794414} & 112 & \texttt{326098943586457723835536286689221631480923505408616997512} \\
43 & \texttt{116831074992641583751696320561626165501245} & 113 & \texttt{427545744729380595404459883977959293782090431766129839491} \\
44 & \texttt{284859298579712299084493469154590845305388} & 114 & \texttt{557873500853884614403845250846271169251887518382401403665} \\
45 & \texttt{681779530764741820478291882973859614957314} & 115 & \texttt{724485556086087729896808894650276628823318414709593035249} \\
46 & \texttt{1602788784664503291621507573883111451173569} & 116 & \texttt{936450690218875864693947919565373519302424846454279777504} \\
47 & \texttt{3703301845432369908785612571005308781591435} & 117 & \texttt{1204816899696992337867525272341424491173254758179016669803} \\
48 & \texttt{8414527213981368944139599911375318913732175} & 118 & \texttt{1542969592289724522291342845534335556873598253887906371172} \\
49 & \texttt{18811875243930833722384427579249221199063910} & 119 & \texttt{1967037150082111860392692813171412667784111418838488694817} \\
50 & \texttt{41401665394072300081918646463077079579324511} & 120 & \texttt{2496346229668124712458704839221869342289338800484580897420} \\
51 & \texttt{89742250495393665809047270759308196032197932} & 121 & \texttt{3153928358421739628925011250454787320748442356332111528044} \\
52 & \texttt{191676503409878652910133406812912916904659510} & 122 & \texttt{3967078325327835339491413659388089402577796031533560015376} \\
53 & \texttt{403573772774826897021814447150924209439316018} & 123 & \texttt{4967963531543176954396783986040576098177525922145412401008} \\
54 & \texttt{837990609890608197130928453959556053583708092} & 124 & \texttt{6194281842494482911599445121525978501963746682662534172295} \\
55 & \texttt{1716677973756024724773148488004513806529688649} & 125 & \texttt{7689963560130954919981954863524627696072361560448387878484} \\
56 & \texttt{3470844242511258301203574564070930328712994256} & 126 & \texttt{9505910910577732090640615764569044542715016357951274841326} \\
57 & \texttt{6928408976547616232966797170236774631017625616} & 127 & \texttt{11700765930028385046820418364788163599409288991545504234871} \\
58 & \texttt{13659422171708110916245726698317563433559482314} & 128 & \texttt{14341694854911383207599082078943837161114133342761967756463} \\
59 & \texttt{26605605388226299611218036775749427658863018686} & 129 & \texttt{17505174121115597914574983750688905882157242665199385815394} \\
60 & \texttt{51214297454992994818685666099337064583069532998} & 130 & \texttt{21277759908104092878124475081545773306722320976831729912608} \\
61 & \texttt{97457629511876267127725963209769359468099504520} & 131 & \texttt{25756819901648908802290644198159825170270661307661331401595} \\
62 & \texttt{183387909102385587216814287568720873157672917750} & 132 & \texttt{31051202686510439328984308214769925892930021663153687766553} \\
63 & \texttt{341329775557352319164061693405348276364966045285} & 133 & \texttt{37281817028465575723916787892260728551101847523142342267661} \\
64 & \texttt{628549766544769347758193271790841032181642315051} & 134 & \texttt{44582090391297348235698910987171077833818236714650619972089} \\
65 & \texttt{1145449277816817037508667199611225721769590602225} & 135 & \texttt{53098273501081533896065665427111578851484778846979178264547} \\
66 & \texttt{2066269561809087216739501519266560633287911551635} & 136 & \texttt{62989555771367948294115876645952863789582826831468866773841} \\
67 & \texttt{3690396423667051780249532601080270038500916742780} & 137 & \texttt{74427955100159019368607271765490895396810797965631650777995} \\
68 & \texttt{6527246313379259487454446847473539040081588767996} & 138 & \texttt{87597945106718650409202924718710781900243198046076418001513} \\
69 & \texttt{11435368051657622996463349205259249274928614071277} & 139 & \texttt{102695783453098157795668846208346313892015079718362829523415}
\\
\end{longtable}
\addtocounter{table}{-1}
\endgroup

\clearpage
\begingroup
\setlength{\LTleft}{\fill}
\setlength{\LTright}{\fill}
\setlength{\tabcolsep}{3pt}
\fontsize{8}{9.2}\selectfont
\renewcommand{\arraystretch}{1.05}
\begin{longtable}{r@{\hspace{0.8em}}l}
\multicolumn{2}{c}{\tablename\ \ref{tab:numerator-half}\ (continued)}\\
\toprule
Index & Coefficient\\
\midrule
\endfirsthead
\multicolumn{2}{c}{\tablename\ \ref{tab:numerator-half}\ (continued)}\\
\toprule
Index & Coefficient\\
\midrule
\endhead
\bottomrule
\endlastfoot
140 & \texttt{119928506640853723387325848071385309688753523500216414840465} \\
141 & \texttt{139512559718225960556052750184080422381935293629049220991201} \\
142 & \texttt{161672033780953303131868564860967659040437261240023643110083} \\
143 & \texttt{186636490070088995466842243981638125067922855856268428495605} \\
144 & \texttt{214638356889323188549786984807339643927972897527995293377873} \\
145 & \texttt{245909894459280223493256903759183831183293428736465214718368} \\
146 & \texttt{280679733118150716660028564160324660769823812982761379683808} \\
147 & \texttt{319169001827524662711978236543001294562723950093462824481339} \\
148 & \texttt{361587076547078229173085410816735525582602905527192579717115} \\
149 & \texttt{408126991436107684318049259155037963401172381017681703976478} \\
150 & \texttt{458960569696923301926382641059826077868285154158885283601098} \\
151 & \texttt{514233344811835561227895448553511546170149057793461824251841} \\
152 & \texttt{574059356510767992962220173677904116532856949611364456635253} \\
153 & \texttt{638515918572021317042289874018539723261321070786884107884043} \\
154 & \texttt{707638467012323838157134441051405542639788613393363957673226} \\
155 & \texttt{781415606864330919497446741651074421964822201679130079561670} \\
156 & \texttt{859784483080871358637975648069628874602854499957289573684205} \\
157 & \texttt{942626605685986579873179513548273394735375704808098332003900} \\
158 & \texttt{1029764260703602210440660266171380880436496205900648238886217} \\
159 & \texttt{1120957636295854914541069932965131821194600214349493115501638} \\
160 & \texttt{1215902787683906496051032121723321108953132032526066991690128} \\
161 & \texttt{1314230554659705981921434089303330605648605850638863749790830} \\
162 & \texttt{1415506531803011222399021280379946433481026141822995199297485} \\
163 & \texttt{1519232174000259986860385907739714989993923518179803410486500} \\
164 & \texttt{1624847098763790950619965795412189909052421211741550876491661} \\
165 & \texttt{1731732622552863098129542374450084027475909611268031836596668} \\
166 & \texttt{1839216541317314341650022045619929399183902270001361350747845} \\
167 & \texttt{1946579136460148159813470916807608800121564554158286404091232} \\
168 & \texttt{2053060357095317547549303473303958058708010829721964006143104} \\
169 & \texttt{2157868098698251291129899324131676044177996295742258019883957} \\
170 & \texttt{2260187467909029891924708161366601135288216372511233121006570} \\
171 & \texttt{2359190894285255891103229359190641040187459072242502499788260} \\
172 & \texttt{2454048923149178359728744531556507760415733831664026396092855} \\
173 & \texttt{2543941500235407649289803472881567197231788463697415439953174} \\
174 & \texttt{2628069539459810970261756396287773486119146438847036757369457} \\
175 & \texttt{2705666550536605956960895773079142233482929346549393099334003} \\
176 & \texttt{2776010093980093295302153389189387793522762449983650928146195} \\
177 & \texttt{2838432827695372107808008248131389046221589558774806822411833} \\
178 & \texttt{2892332912167533700460066588147791086352624888725546127713505} \\
179 & \texttt{2937183550288082484000115852214848277468430618439512674577767} \\
180 & \texttt{2972541452997381432685248148610780220700253832634823541252336} \\
181 & \texttt{2998054042858551138020951368301941725719613480518841050700683} \\
182 & \texttt{3013465233903294880850919342942357472311352263834139042293490} \\
183 & \texttt{3018619656915957582751639202183157957264300815992933749937910}
\\
\end{longtable}
\endgroup
\clearpage
}

\section{Order-seven certificate ledger}
\label{app:certificates}

\begingroup
\small
\begin{longtable}{>{\raggedright\arraybackslash}p{0.40\textwidth}
                  >{\raggedright\arraybackslash}p{0.53\textwidth}}
\caption{Primary immutable identifiers of the computation.}\\
\toprule
Object & SHA-256 identifier\\
\midrule
\endfirsthead
\toprule
Object & SHA-256 identifier\\
\midrule
\endhead
Canonical $(N_7,D_7)$ pair &
\hashcode{ba771912ac5aaffdb3a9ab3977ed7cd3d5ae77783910121fc9b511b01516a79e}\\
\nolinkurl{F7_rational.json} &
\hashcode{b8ccc69e2c8d4001e33b449da900c314dc3d5d6adc5953104ce0b62df7ea8c07}\\
\nolinkurl{n7.masks.bin} &
\hashcode{8fcecacc8d1ccc3f5f90e80c8dc7db4f8a8e3700b8f68e342b40e8c441fe21bc}\\
\nolinkurl{sc4_series_stride.c} &
\hashcode{a742503b4e21bc69c1668e28531179558b6bfadd73288d3b017eb811f18eb5ee}\\
\nolinkurl{phase2_ifma_replay.cpp} &
\hashcode{3c5f7a38a3b1ff9ae0d8c18a581228bc3a511eaa92761388f4ceb214c190596a}\\
\nolinkurl{ims7_ifma_replay} &
\hashcode{f89e10510fb3a0618e6e9d77c977641612978dded5d919febcfe45d309721312}\\
First complete vertex-denominator sequence &
\hashcode{9acac001df2a1cdd8f17fc0cd314b232b6b325912a3ec1d2fac1d1aef9396ff4}\\
Second complete compressed vertex transcript &
\hashcode{e7706f2cf0fde6312e0edad59850d04afeb9e5729037ac8ebaba7ff38fadfdd6}\\
\nolinkurl{vertex_transcript_verification.json} &
\hashcode{44f087e264075a707ee7b1dca946f8f61493bc4ed5abae9c3fb3cacedae91fb4}\\
\nolinkurl{merged_prefix.json} &
\hashcode{4dbffb598d367c2078cfc067770d6c2fd28dfd766c8e847ab83934c31855920f}\\
\nolinkurl{deterministic_identity_certificate.json} &
\hashcode{11663187847118688f6d36ecff73ef56215db741cacbeb0ab8151b88fc9ad5da}\\
\bottomrule
\end{longtable}
\endgroup

These identifiers bind the canonical rational pair, the complete encoded cone
input, the scalar reconstruction source, the compact eight-field source and
executable,
the first exhaustive vertex-denominator sequence, a second full vertex text
transcript and its verifier, the merged eight-prime prefix, and the final
deterministic identity certificate.  The distribution archives are separately
covered by sidecar hashes and by a full read of every member after transfer
from the computation server.

\section{The order-seven compact deterministic verification data}
\label{app:verification-capsule}

The compact data archive contains the paper, the exact rational pair, and the
complete sequence of vertex denominators.  It also contains the modular output
for all $512$ IFMA intervals and their hashes, the sum giving the eight-prime
coefficient prefix, the original four modular coefficient sequences, and the
low-degree, dynamic-programming, volume, and direct-evaluation data.  It
intentionally omits the $1{,}993{,}883{,}024$-byte file
\nolinkurl{n7.masks.bin}.  Despite its historical filename, this file is simply
the archived list of the subsets $J\in\mathcal C_7$ and their coefficients
$\sigma_J$, with each subset encoded as a bit vector.  Its identifier appears
in Appendix~\ref{app:certificates}; the larger source archive contains this
encoded cone list and the program that derives the prime-independent data
needed for full reproduction.

With Python~3.9 or later and no third-party package, an extracted capsule is
checked by
\begin{center}
\texttt{python}\quad\nolinkurl{verify/verify_deterministic_identity.py}
\end{center}
The checker performs the following operations rather than merely reading the
stored success indicators.

\begin{enumerate}[label=(\roman*)]
  \item It rereads all $5{,}920{,}184$ vertex denominators, recomputes
  Table~\ref{tab:vertex-denominators}, and checks the enumeration metadata and
  input hashes.
  \item It parses \nolinkurl{ims7.ine}, verifies the row, column, diagonal, and
  nonnegativity constraints, and recomputes equality rank $15$ and dimension
  $34$.
  \item It rebuilds $D_7$ from the displayed cyclotomic factors, verifies that
  $D_7\mid U_{15}$, checks reciprocity and the cutoff $1256$, and proves that
  $N_7$ and $D_7$ are coprime.
  \item It hashes and structurally validates all $512$ modular files before
  adding them independently.  The intervals cover exactly $166{,}156{,}916$ cones,
  and all eight complete residue sequences agree with $N_7/D_7$ through degree
  $1256$.
  \item It repeats centered CRT coefficient by coefficient with the first six
  IFMA primes and recomputes the counting bound of
  Theorem~\ref{thm:count-bound}.  The $312$-bit modulus exceeds twice the
  $299$-bit bound by the factor in~\eqref{eq:ifma-crt-bound}.
  \item It also repeats the original four-prime numerator reconstruction,
  regenerates the low coefficients, compares the independent dynamic-program
  values and exact-volume records, and checks~\eqref{eq:volume-identity}.
\end{enumerate}

These checks distinguish verification from full reproduction.  The compact
program deterministically verifies the vertex bound, every downstream
arithmetic step, every packaged interval, and the theorem reducing the infinite
series to a finite prefix.  It does not regenerate the SimpCone index family or
recompute the prime-independent cone data from the two-gigabyte encoded cone
file; those operations use the separately archived source package.  This is a
reproducibility boundary, not a
probabilistic mathematical qualification.  More precisely, ``deterministic''
means that no random identity test enters the proof; it does not mean formally
verified software.  The trusted computing base includes the SimpCone
decomposition program, the LRQC programs that form the prime-independent cone
data and evaluate it in finite fields, and the exact-arithmetic \code{lrs}
enumeration.  Their independent low-degree, earlier-recursive-evaluator,
volume, repeated-enumeration, and extreme-parameter checks are recorded above.
The $16$ direct evaluations are retained as a separate code-path
check and are not used in the finite-prefix proof.

\section*{Declaration of generative AI and AI-assisted technologies in the
manuscript preparation process}

During the preparation of this work, the authors used ChatGPT for literature
orientation and for the language and organization of the manuscript.  They
used \mbox{OpenAI Codex} for code inspection and computational checking.
After using these tools, the authors reviewed and edited the content and take
full responsibility for the publication.

\end{document}